\newcommand{\D}[1]{\mbox{\rm #1}} 
\newcommand{\dd}{\D{d}}
\numberwithin{equation}{section}
\theoremstyle{plain} 
\newtheorem{theorem}{Theorem}[section]
\newtheorem{lemma}[theorem]{Lemma}
\newtheorem{corollary}[theorem]{Corollary}
\newtheorem{remark}[theorem]{Remark}
\newtheorem{definition}[theorem]{Definition}
\definecolor{ForestGreen}{RGB}{34,139,34}
\definecolor{ao(english)}{rgb}{0.0, 0.5, 0.0}
\begin{document}

\title[Non-Convex Global Optimization as Optimal Stabilization]{Non-Convex Global Optimization as an Optimal Stabilization Problem: Dynamical Properties}
\thanks{Y.H. has been supported by a Roth Scholarship from Imperial College London.}

\author{Yuyang Huang}
\address{Yuyang Huang \newline 
Department of Mathematics, Imperial College London, South Kensington Campus SW72AZ London, UK
}
\email{\texttt{yuyang.huang21@imperial.ac.uk}}
\thanks{ 
}

\author{Dante Kalise}
\address{Dante Kalise \newline 
Department of Mathematics, Imperial College London, South Kensington Campus SW72AZ London, UK
}
\email{\texttt{d.kalise-balza@imperial.ac.uk}}
\thanks{
}

\author{Hicham Kouhkouh}
\address{Hicham Kouhkouh \newline 
Department of Mathematics and Scientific Computing, NAWI, University of Graz, 
Heinrichstra{\ss}e 36, 8010, Graz, Austria
}
\email{\texttt{hicham.kouhkouh@uni-graz.at}}
\thanks{}







\date{\today}


\begin{abstract}
We study global optimization of non-convex functions through optimal control theory. Our main result establishes that (quasi-)optimal trajectories of a discounted control problem converge globally and practically asymptotically to the set of global minimizers. Specifically, for any tolerance $\eta > 0$, there exist parameters $\lambda$ (discount rate) and $t$ (time horizon) such that trajectories remain within an $\eta$-neighborhood of the global minimizers after some finite time $\tau$. This convergence is achieved directly, without solving ergodic Hamilton--Jacobi--Bellman equations. We prove parallel results for three problem formulations: evolutive discounted, stationary discounted, and evolutive non-discounted cases. The analysis relies on occupation measures to quantify the fraction of time trajectories spend away from the minimizer set, establishing both reachability and stability properties.
\end{abstract}

\subjclass[MSC]{37N35, 90C26, 49L12, 35Q93}

\keywords{Global optimization, non-convex optimization, optimal control, occupation measure, Hamilton--Jacobi--Bellman equation}

\maketitle

\section{Introduction}

Given a non-convex, continuous function $f:\mathbb{R}^{n}\to \mathbb{R}$, we are concerned with the solution of the global optimization problem
\begin{equation}\label{opt intro}
    \min\limits_{z\in\mathbb{R}^{n}}\, f(z)
\end{equation}
using optimal control methods. To do so, we introduce the following optimal stabilization problem:
\begin{equation}\label{OCP intro}
\begin{aligned}
    u_{\lambda}(x,t) := \inf_{\alpha(\cdot)} \; & \int_{0}^{t} \left(\frac{1}{2}|\alpha(s)|^{2} + f(y_{x}^{\alpha}(s))\right) e^{-\lambda s}\,\mathrm{d}s\,,\qquad \lambda>0\,, \\
    \text{subject to} \quad & \dot{y}_{x}^{\alpha}(s) = \alpha(s), \quad y_{x}^{\alpha}(0)=x\in\mathbb{R}^n\,,
\end{aligned}
\end{equation}
where $y_{x}^{\alpha}(\cdot)$ denotes the trajectory starting from $x$ with velocity $\dot{y}_{x}^{\alpha}(s) = \alpha(s)$, and $\alpha(\cdot):[0,\infty) \to \mathbb{R}^{n}$ is a measurable control. The value function $u_{\lambda}(x,t)$ represents the optimal cost-to-go over all such admissible trajectories.

Our goal is to investigate dynamical properties of (quasi-)optimal trajectories in \eqref{OCP intro} for global optimization. More precisely, our main results concern reachability (attainability) and stability of the set of global minimizers of the function $f(\cdot)$ that we wish to minimize, using (quasi-)optimal trajectories.  
We will consider three cases:
\begin{enumerate}[label = \underline{\textit{Case \arabic*}}]
    \item When $t<\infty$, and $\lambda>0$: this is the evolutive discounted problem whose value function shall be denoted by $u_{\lambda}(x,t)$. 
    \item\label{case 2} When $t=\infty$, and $\lambda>0$: this is the stationary discounted problem whose value function shall be denoted by $u_{\lambda}(x)$. 
    \item\label{case 3} When $t<\infty$, and $\lambda=0$: this is the evolutive non-discounted problem whose value function shall be denoted by $u(x,t)$. 
\end{enumerate}

At first sight, our method appears to replace a finite-dimensional optimization problem with one of infinite dimension. However, this reformulation is deliberate rather than heuristic. The setting we consider involves non-convex objective functions that may admit multiple global minimizers, making direct optimization particularly challenging. By embedding the problem into a family of optimal control systems, we \textit{lift} it to an infinite-dimensional framework where the dynamics can be analyzed. This control formulation naturally provides trajectories that provably converge to the set of global minimizers, offering a systematic way to navigate complex objective landscapes. It is worth to note that this approach is entirely deterministic, relying on control-theoretical principles rather than on stochastic or heuristic mechanisms. This viewpoint not only provides a rigorous foundation for the convergence analysis, but also offers a new dynamical interpretation of the optimization process.

In the companion manuscript \cite{huang2025convergence}, we focus on \ref{case 2}, and carry out a deeper analysis of the convergence properties, in particular by establishing explicit rates of convergence and refined quantitative estimates.

Throughout the manuscript we shall need some of the following assumptions on the function $f(\cdot)$ to be minimized, and we will refer to them wherever they are needed.

\textbf{Assumptions (A)}
\begin{enumerate}[label=\textbf{(A\arabic*)}]
\item\label{f: nice} $f : \mathbb{R}^n\to\mathbb{R}$ is  continuous and bounded that is
    \begin{equation*}
    \exists\;\underline{f},\,\overline{f}\; \text{ such that }\; \underline{f} \leq f(x) \leq \overline{f},\quad \forall\;x\in \mathbb{R}^n.
    \end{equation*}
\item\label{f: has min} $f$ attains the minimum, i.e.,  
  \begin{equation*}
    \mathfrak{M}:=\{x\in\mathbb{R}^{n}\,:\, f(x) = \underline{f}:= \min\limits_{z\in\mathbb{R}^{n}}f(z)\} \ne \emptyset .
\end{equation*}
\item\label{f: stab} For all $\delta>0$, there exists  $\gamma=\gamma(\delta)>0$ such that
    \begin{equation*}
         \inf\{f(x) - \underline{f}\,:\,\text{dist}(x,\mathfrak{M})\,> \delta\} \,>\, \gamma(\delta).
    \end{equation*}
\end{enumerate}
Assumption \ref{f: stab} is similar to  \cite[Assumption (H)]{bardi2023eikonal}. As it has been noted in \cite{bardi2023eikonal}: if $\mathfrak{M}$ is bounded, then  Assumption \ref{f: stab} is equivalent to $\liminf\limits_{|x|\to \infty} f(x)> \underline{f}$. The latter inequality is however not true when $\mathfrak{M}$ is unbounded. Assumption \ref{f: stab} expresses a form of finite energy gap or strict separation between the global minimizers of $f(\cdot)$ and all other regions of the state space. 
It ensures that outside any fixed neighborhood of the minimizer set $\mathfrak{M}$, the energy $f(\cdot)$ remains uniformly higher than its minimal value $\underline{f}$. More precisely, any state lying a fixed distance $\delta$ away from equilibrium (the minimizer set) requires at least an additional amount of energy $\gamma(\delta)$ compared to the minimal energy $\underline{f}$. In the context of Gibbs measures or small-noise asymptotics, this property prevents mass concentration away from $\mathfrak{M}$ and guarantees that, as temperature or noise vanishes, the measure concentrates exponentially near the set of global minimizers. 
In optimization terms, this condition expresses a strict separation between global and non-global states, quantifying how much worse any non-optimal configuration must be. It provides the necessary stability for the minimizers and for the validity of large-deviation or Laplace-type asymptotic analyses \cite[Theorem 3.9]{huang2025faithful}. 

\subsection{Contribution} The main result of this paper concerns global \textit{practical} asymptotic stability of the set $\mathfrak{M}$. This is a relaxed version of classical asymptotic stability, particularly when exact convergence to the equilibrium (the set $\mathfrak{M}$) is not expected or required (e.g., due to model uncertainty, disturbances, or computational constraints). The result reads as follows.

\begin{theorem}
Let Assumptions \textbf{(A)} be satisfied. 
Let $y_{x}^{\alpha}(\cdot)$ be an optimal trajectory for problem (\ref{OCP intro}). 
Then for all $\eta>0$, there exist $\lambda>0$ small, $t>0$ large, and $\tau = \tau(\eta, \lambda,t)>0$ such that 
\begin{equation*}
    \operatorname{dist}\left(y_{x}^{\alpha}(s), \mathfrak{M}\right) \leq \eta \quad \quad \forall\, s\in [\tau, t].
\end{equation*}
In other words, $y_{x}^{\alpha}(s) \in \left\{z\in \mathbb{R}^{n}: \operatorname{dist}(z,\mathfrak{M})\leq \eta\right\}$ for all $s\in [\tau, t]$. 

The same conclusion holds for \ref{case 2}, i.e. when $\lambda>0$ and $t=\infty$ in \eqref{OCP intro}, and for \ref{case 3}, i.e. when $\lambda = 0$ and $t<\infty$ in \eqref{OCP intro}.
\end{theorem}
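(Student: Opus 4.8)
The plan is to combine a quantitative upper bound on the value function with an occupation-measure estimate, and then to upgrade the resulting time-averaged information into the claimed pointwise-in-time bound. The organizing quantity is the excess cost $E_{\lambda}(x,t) := u_{\lambda}(x,t) - \underline{f}\,(1-e^{-\lambda t})/\lambda \ge 0$, which is nonnegative since $f\ge\underline{f}$ by \ref{f: has min}, and which along the optimal trajectory splits as the sum of the discounted kinetic energy $\int_{0}^{t}\tfrac12|\alpha|^{2}e^{-\lambda s}\,ds$ and the discounted potential excess $\int_{0}^{t}(f(y_{x}^{\alpha})-\underline{f})\,e^{-\lambda s}\,ds$. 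A uniform upper bound on $E_{\lambda}$ comes from an explicit competitor: fixing $\bar{x}\in\mathfrak{M}$, steer from $x$ to $\bar{x}$ at constant velocity over a transit time $T$ and then set $\alpha\equiv 0$; by \ref{f: nice} the potential excess is at most $(\overline{f}-\underline{f})T$ and the kinetic term at most $|x-\bar{x}|^{2}/(2T)$, so optimizing in $T$ gives $E_{\lambda}(x,t)\le C(x)$ with $C(x)$ independent of $\lambda$ and $t$. Hence both the kinetic energy and the potential excess of the optimal trajectory are bounded by $C(x)$ uniformly in the parameters.

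Next comes the occupation-measure step, which delivers reachability. Setting $S_{\delta}:=\{s\in[0,t]:\operatorname{dist}(y_{x}^{\alpha}(s),\mathfrak{M})>\delta\}$ and applying \ref{f: stab}, on $S_{\delta}$ the integrand $f-\underline{f}$ exceeds $\gamma(\delta)$, whence $\gamma(\delta)\int_{S_{\delta}}e^{-\lambda s}\,ds\le E_{\lambda}(x,t)\le C(x)$. Dividing by the total discounted mass $\int_{0}^{t}e^{-\lambda s}\,ds=(1-e^{-\lambda t})/\lambda$ shows that the discounted fraction of time spent outside the $\delta$-neighborhood of $\mathfrak{M}$ is at most $C(x)\lambda/[\gamma(\delta)(1-e^{-\lambda t})]$, which tends to $0$ as $t\to\infty$ with $\lambda$ fixed small, and as $\lambda\to 0$ for the appropriate regime of $t$. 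Hence, for $\lambda$ small and $t$ large the trajectory cannot remain outside the $\delta$-neighborhood and must enter it at some finite time $\tau$.

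The delicate point, and the one I expect to be the main obstacle, is upgrading this fraction-of-time statement to the pointwise conclusion $\operatorname{dist}(y_{x}^{\alpha}(s),\mathfrak{M})\le\eta$ for \emph{every} $s\in[\tau,t]$, i.e. the stability half. My plan is to forbid excursions by optimality: on any interval $[\sigma_{1},\sigma_{2}]$ over which the distance climbs from $\eta/2$ to $\eta$, a Cauchy--Schwarz bound forces a kinetic cost $\gtrsim\eta^{2}/(\sigma_{2}-\sigma_{1})$ while \ref{f: stab} forces a potential cost $\gtrsim\gamma(\eta/2)(\sigma_{2}-\sigma_{1})$, so a cut-and-reconnect competitor that stays closer to $\mathfrak{M}$ strictly lowers the cost and contradicts optimality. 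The care lies in (a) localizing the budget via the dynamic programming principle, restarting at the entry time $\tau$ so that the available budget $C(y_{x}^{\alpha}(\tau))$ becomes as small as $\eta$, and (b) controlling the behaviour near the terminal time $t$, where the discount $e^{-\lambda s}$ and the short remaining horizon weaken both penalties; here one uses that a vanishing remaining horizon removes the incentive to move, so the trajectory cannot drift out of the $\eta$-neighborhood before $t$.

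Finally, the two remaining formulations follow by the same scheme. For \ref{case 2} ($t=\infty$, $\lambda>0$) the total mass is $1/\lambda$ and the estimates pass to the limit $t\to\infty$ verbatim, requiring only $\lambda$ small. For \ref{case 3} ($\lambda=0$, $t<\infty$) the discount disappears, the total mass becomes $t$, and the occupation bound reads $\gamma(\delta)\,|S_{\delta}|\le C(x)$ directly in Lebesgue measure, so $|S_{\delta}|/t\to0$ as $t\to\infty$; the reachability and excursion-exclusion arguments transfer with the undiscounted budget, the terminal-horizon subtlety being in fact milder in the absence of discounting.
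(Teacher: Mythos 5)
Your plan reproduces the architecture of the paper's own proof of Theorem \ref{thm: main conclusion} --- a linear-in-distance bound on the excess cost, an occupation-measure estimate yielding reachability (Theorem \ref{thm: stab lambda t}, Corollary \ref{cor: stab lambda t}), an excursion-exclusion argument yielding Lyapunov stability (Lemma \ref{lem: lower bound mu 1}, Theorem \ref{thm: Lyapunov lambda t}), and a dynamic-programming restart at the entry time to combine the two --- but you implement the two key technical inputs by genuinely different, and more elementary, means. For the budget bound, the paper controls $u_{\lambda}(x,t)-\frac{1-e^{-\lambda t}}{\lambda}\underline{f}$ by the gradient estimate $|Du_{\lambda}|\le R=\sqrt{6\|f\|_{\infty}}$ of Lemma \ref{lem: bounds}, whose proof occupies Appendix \ref{appendix} (mollification, comparison principles); your straight-line competitor yields the same linear bound with constant $\sqrt{2(\overline{f}-\underline{f})}$ using only Assumptions \ref{f: nice} and \ref{f: has min}. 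For stability, the paper converts a single excursion beyond distance $\delta$ into a dwell time of length $\ge\delta/M$ outside $K_{\gamma(\delta/2)}$, using the bounded-control reduction of Remark \ref{rem: bounded controls}, and then plays the resulting lower bound on $\mu_{t}^{\lambda}$ against the upper bound of Theorem \ref{thm: stab lambda t}; you instead price each excursion by its kinetic cost (Cauchy--Schwarz) plus its potential cost (Assumption \ref{f: stab}), getting a per-excursion price of order $\eta\sqrt{\gamma(\eta/2)}$ without ever needing the control bound $M$. Both mechanisms are sound; yours is more self-contained, the paper's gives the cleaner quantitative statement in terms of occupation measures, which is what it advertises as a separate contribution.

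The one place where your sketch is weaker than the paper is precisely the point you flag: the discount degradation on $[\tau,t]$. An excursion occurring at elapsed time $\sigma$ after the restart is penalized only with weight $e^{-\lambda(\sigma-\tau)}$, which can be as small as $e^{-\lambda t}$, and your proposed remedy (``a vanishing remaining horizon removes the incentive to move'') is a heuristic, not an argument: it addresses excursions with short remaining horizon, but not excursions at intermediate times in the regime $\lambda(t-\tau)\gg 1$, where the discount is already tiny and the remaining horizon is still long. The paper does not circumvent this issue either; it carries the factor $e^{\lambda\tau}\le e^{\lambda t}$ explicitly (see the last display in the proof of Theorem \ref{thm: Lyapunov lambda t}) and then demands that the initial proximity and $\varepsilon$ be small compared with $e^{-\lambda t}\gamma(\delta/2)\delta/M$, which is compatible with reachability because $\lambda$ and $t$ may be coupled, e.g.\ $t=1/\lambda$ as in \eqref{config: lambda t}, so that every discount factor on $[0,t]$ is bounded below by $e^{-1}$. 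Your proof closes the same way: impose $t\asymp 1/\lambda$, after which your per-excursion price survives up to a factor $e^{-1}$ and the terminal-time case analysis becomes unnecessary. With that replacement for your item (b), the proposal is complete.
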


We shall also provide a measure (in a suitable sense that we will make precise) for the fraction of time during which an optimal trajectory remains outside of a neighborhood of $\mathfrak{M}$, that is of the interval $[0,\tau]$. More precisely, our result shows that this measure vanishes when either $\lambda\to0$ or $t\to \infty$. 
Additionally, we establish our  results for the broader class of quasi-optimal trajectories, introduced in Definition \ref{def: quasi}. 

The main idea underlying the present manuscript is inspired by the approach developed in \cite{bardi2023eikonal}, where the authors design a gradient flow that drives any given initial state toward the set of global minimizers, denoted by $\mathfrak{M}$. Their method is based on analyzing the ergodic problem associated with a Hamilton--Jacobi--Bellman (HJB) equation. Specifically, they study two problems: one stationary, given by $u_{\lambda}(x)$, and one evolutionary, given by $u(x,t)$. By examining the asymptotic limits as $\lambda\to 0$ and $t\to\infty$, they derive a limiting function $v(x)$, which satisfies an ergodic HJB equation of Eikonal type. 
This limiting function is further shown to coincide with the value function of a time-optimal control problem with a free terminal time (i.e., a stopping time). It is this value function $v(x)$ that ultimately guides the system to $\mathfrak{M}$, thereby achieving global optimization through a rigorously justified dynamical mechanism.

In contrast, our approach demonstrates that the original problems, namely the stationary discounted problem $u_{\lambda}(x)$ and the time-dependent problem $u(x,t)$, in addition to the discounted evolutionary problem $u_{\lambda}(x,t)$, are each capable of directly steering the system to the global minimizers without the intermediary step of solving the ergodic problem. In particular, the function $u_{\lambda}(x,t)$ which combines temporal evolution with a discount parameter, plays a central role in unifying the analysis of the previous two cases. 

This distinctive and more direct route to global optimization is illustrated in Figure \ref{fig:placeholder}, where the dynamics induced by all three formulations are shown to asymptotically identify the set of global minimizers $\mathfrak{M}$ without passing through the ergodic limit, highlighting a more direct path to optimization within our framework.

\begin{figure}[h!]
    \centering
\includegraphics[width=0.5\linewidth]{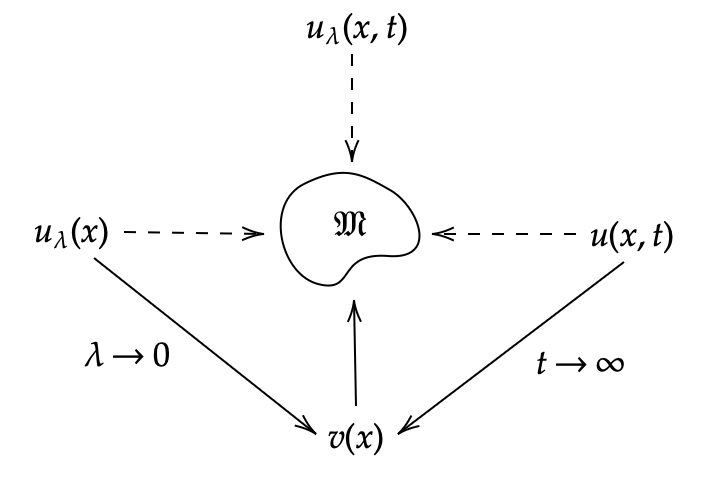}
    \caption{Illustration of the convergence relationships: \\The dashed arrows ``$\dashrightarrow$'' are those in the present manuscript.\\ The other arrows ``$\rightarrow$'' are those studied in \cite{bardi2023eikonal}.}
    \label{fig:placeholder}
\end{figure}

We note that the convergences $u_{\lambda}(x,t) \to u_{\lambda}(x)$ when $t\to \infty$, and $u_{\lambda}(x,t)\to u(x,t)$ when $\lambda\to 0$, can be established using standard arguments from HJB theory, thus completing the conceptual picture illustrated in Figure \ref{fig:placeholder}. However, we omit the details here, as these results are not central to our primary focus on optimization.

In the companion work \cite{huang2025convergence}, we focus on the stationary discounted case (\ref{case 2}), where we develop a deeper theoretical analysis of the convergence dynamics and derive explicit convergence rates as well as more precise quantitative results.

\subsection{Related literature}

Nonconvex optimization lies at the heart of modern data science, from neural-network training to inverse problems and complex engineering design. Despite substantial progress, computing global minimizers of highly nonconvex optimization problems remains challenging due to complex landscapes with multiple basins and extensive flat regions. In recent years, connections between nonconvex optimization and nonlinear partial differential equations (PDEs) have been systematically explored. 

For utilizing PDEs to solve nonconvex optimization problems, a well-established line of work links proximal operators and Moreau envelopes to viscous Hamilton–Jacobi (HJ) equations via the Hopf-Lax formula and the Cole-Hopf transformation \cite{chaudhari2018deep,osher2023hamilton,heaton2024global}. In this formulation, the Moreau envelope, which is an infimal convolution, arises as the viscosity solution of an HJ initial-value problem, the short-time viscous evolution produces a smooth surrogate of the objective while preserving its global structure. Laplace-type approximations of such infimal convolutions further motivate sampling-based optimization methods \cite{tibshirani2025laplace}. Building on these ideas, \cite{heaton2024global} show that viscosity widens narrow local valleys while preserving global minima, thereby facilitating escape from local traps. A complementary direction employs PDE convexification, where the convex envelope is characterized as the solution of a nonlinear (obstacle-type) PDE \cite{oberman2007convex,oberman2008computing,oberman2011dirichlet,oberman2017partial,carlier2012exponential}. 

Related developments have also emerged in machine learning. Entropy-SGD \cite{chaudhari2019entropy} admits a viscous Hamilton–Jacobi interpretation and   \textit{Deep Relaxation} formalizes this by casting training as a time-dependent viscous HJ PDE and analyzing the induced smoothing and generalization via tools from stochastic control and homogenization \cite{chaudhari2018deep}. Building on this perspective, control-theoretic refinements apply singular-perturbation arguments to controlled variants of SGD, yielding insights into stability and performance \cite{bardi2023singular,bardi2024deep}.

Diffusion-based smoothing and continuation provide another PDE pathway to mitigate nonconvexity. \cite{mobahi2015link} show that Gaussian homotopy continuation ( minimizing a sequence of Gaussian-smoothed objectives) acts as a heat-flow approximation to the convex-envelope evolution. Finally, PDEs also inform the design of accelerated dynamics. \textit{PDE Acceleration} derives momentum-like schemes from variational principles cast as PDEs \cite{benyamin2018accelerated,calder2019pde}. In a related spirit, collective-dynamics methods such as Consensus-Based Optimization (CBO) can be improved by optimal-feedback control derived from the HJB equations \cite{huang2024fast}.

The rest of the manuscript is organized as follows. In \textbf{Section \ref{sec: prelim}}, we collect several preliminary estimates, definitions, and auxiliary results that will be used throughout the paper. In particular, we recall the concept of occupation measures (Definition \ref{def: occ}) and prove the existence of optimal trajectories (Theorem \ref{thm: existence}). \textbf{Section \ref{sec: reach}} addresses the issue of \textit{reachability}, proving that trajectories can effectively reach the set of global minimizers $\mathfrak{M}$. Specifically, we show that the time spent far from $\mathfrak{M}$ becomes negligible as either $\lambda \to 0$ or $t \to 0$ (Theorems \ref{thm: stab lambda t}, \ref{thm: stab lambda}, and \ref{thm: stab t}). In \textbf{Section \ref{sec: stab}}, we establish a \textit{stability} property, showing that once trajectories enter a neighborhood of the minimizers, they remain within it for all subsequent times. Finally, \textbf{Section \ref{sec: conclusion}} combines the preceding results to prove the main \textit{convergence} theorem for quasi-optimal trajectories (Theorem \ref{thm: main conclusion}). For the sake of completeness, standard proofs of preliminary results are collected in \textbf{Appendix \ref{appendix}} and \textbf{Appendix \ref{appendix 2}}.

\section{Preliminaries}\label{sec: prelim}

Consider the optimal control problem \eqref{opt intro} with horizon $t<\infty$ and discount factor $\lambda>0$. Its value function $u_{\lambda}(x,t)$ is known to be the viscosity solution \cite{bardi1997optimal} of the Hamilton--Jacobi--Bellman (HJB) equation
\begin{equation}\label{eq: HJB lambda t}
\left\{\;
\begin{aligned}
    \partial_{t}u_{\lambda}(x,t) + \lambda\,u_{\lambda}(x,t) + \frac{1}{2}|Du_{\lambda}(x,t)|^{2} &= f(x),  &&\quad\quad (x,t) \in \mathbb{R}^{n}\times(0,+\infty)\\
    u_{\lambda}(x,0) &= 0,  &&\quad\quad x\in \mathbb{R}^{n},,
\end{aligned}
\right.
\end{equation}
where $D$ denotes the gradient with respect to the space variables $x$. 

When $t=\infty$ and $\lambda>0$ in \eqref{opt intro}, we shall denote the value function of the resulting optimal control problem by $u_{\lambda}(x)$. It is also the viscosity solution of the  HJB equation
\begin{equation}\label{eq: HJB lambda}
    \lambda\, u_{\lambda}(x) + \frac{1}{2}|Du_{\lambda}(x)|^{2} = f(x), \quad \quad x\in\mathbb{R}^{n}.
\end{equation}

When $t<\infty$ and $\lambda=0$ in \eqref{opt intro}, we shall denote the value function of the resulting optimal control problem by $u(x,t)$. It is also the viscosity solution of the HJB equation
\begin{equation}\label{eq: HJB t}
\left\{\;
\begin{aligned}
    \partial_{t}u(x,t)  + \frac{1}{2}|Du(x,t)|^{2} &= f(x),  &&\quad\quad (x,t) \in \mathbb{R}^{n}\times(0,+\infty)\\
    u(x,0) &= 0,  &&\quad\quad x\in \mathbb{R}^{n}.
\end{aligned}
\right.
\end{equation}

\subsection{Useful estimates}

Using the optimal control definition and the HJB characterization, we shall withdraw some useful bounds for the value functions. 

\begin{lemma}\label{lem: bounds}
Let assumption \ref{f: nice} be satisfied. 
Then the value functions $u_{\lambda}(x,t)$, $u_{\lambda}(x)$ and $u(x,t)$ satisfy the following bounds for almost every $(x,t)\in \mathbb{R}^{n}\times(0,+\infty)$
\begin{enumerate}[label=(\roman*)]
    \item Bound on the value function: 
    \begin{equation*}
        |\lambda\,u_{\lambda}(x,t)|,\, |\lambda\,u_{\lambda}(x)|\, \leq \|f\|_{\infty}
    \end{equation*}
    \item Bound on its time derivative: 
    \begin{equation*}
        |\partial_{t}u_{\lambda}(x,t)|,\, |\partial_{t}u(x,t)| \,\leq \|f\|_{\infty} 
    \end{equation*}
    \item Bound on its spatial derivative: 
    \begin{equation*}
        |D u_{\lambda}(x,t)|,  \,|D u_{\lambda}(x)|, \,|D u(x,t)| \,\leq \sqrt{6\|f\|_{\infty}}
    \end{equation*}
\end{enumerate}
\end{lemma}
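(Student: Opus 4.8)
The plan is to prove the three bounds in order, deriving (iii) directly from (i) and (ii) through the HJB equations, so that only (i) and (ii) require genuine arguments on the control problem \eqref{OCP intro}.

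For (i), I would bound the value function from above by evaluating the cost along the constant trajectory generated by the null control $\alpha\equiv 0$: since then $y_{x}^{\alpha}(s)\equiv x$, one gets $u_{\lambda}(x,t)\le f(x)\int_{0}^{t}e^{-\lambda s}\,\mathrm{d}s = f(x)(1-e^{-\lambda t})/\lambda$, hence $\lambda u_{\lambda}(x,t)\le\|f\|_{\infty}$. For the lower bound I would discard the nonnegative kinetic term $\tfrac12|\alpha|^{2}$ and use $f\ge\underline f\ge-\|f\|_{\infty}$ from \ref{f: nice}, which yields $u_{\lambda}(x,t)\ge-\|f\|_{\infty}(1-e^{-\lambda t})/\lambda$ uniformly in the control; since $1-e^{-\lambda t}\in[0,1]$ this gives $|\lambda u_{\lambda}(x,t)|\le\|f\|_{\infty}$. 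The stationary case $u_{\lambda}(x)$ is identical with $t=\infty$, the factor $1-e^{-\lambda t}$ being replaced by $1$.

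For (ii), I would run a comparison argument based on the dynamic programming principle, working with $\varepsilon$-optimal controls so as to avoid invoking existence of minimizers. To bound $u_{\lambda}(x,t+h)-u_{\lambda}(x,t)$ from above, I take an $\varepsilon$-optimal control for horizon $t$ and extend it by $\alpha\equiv 0$ on $[t,t+h]$; the extra running cost is $\int_{t}^{t+h}f(y)e^{-\lambda s}\,\mathrm{d}s\le\|f\|_{\infty}\int_{t}^{t+h}e^{-\lambda s}\,\mathrm{d}s$. For the reverse inequality, I take an $\varepsilon$-optimal control for horizon $t+h$, restrict it to $[0,t]$, and bound the discarded portion of the cost from below using $\tfrac12|\alpha|^{2}+f(y)\ge\underline f\ge-\|f\|_{\infty}$. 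Letting $\varepsilon\to 0$, dividing by $h$, and sending $h\to 0$ gives $|\partial_{t}u_{\lambda}(x,t)|\le\|f\|_{\infty}e^{-\lambda t}\le\|f\|_{\infty}$ at every point of differentiability; the same computation with $\lambda=0$ yields the bound for $u(x,t)$.

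Finally, for (iii) I would read the gradient bound off the HJB equations. Since the preceding estimates make the value functions uniformly Lipschitz in $x$ and $t$, they are differentiable a.e. by Rademacher's theorem and, being Lipschitz viscosity solutions, satisfy \eqref{eq: HJB lambda t}, \eqref{eq: HJB lambda}, \eqref{eq: HJB t} pointwise a.e. Solving \eqref{eq: HJB lambda t} for the gradient gives $\tfrac12|Du_{\lambda}(x,t)|^{2}=f(x)-\partial_{t}u_{\lambda}(x,t)-\lambda u_{\lambda}(x,t)$, and bounding the three terms on the right by $\|f\|_{\infty}$ via \ref{f: nice}, (ii) and (i) respectively yields $|Du_{\lambda}(x,t)|^{2}\le 6\|f\|_{\infty}$; the stationary and non-discounted cases drop one term and therefore obey the sharper bound $|Du|^{2}\le 4\|f\|_{\infty}\le 6\|f\|_{\infty}$. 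The one genuinely delicate point is the time-derivative estimate (ii): the clean constant $\|f\|_{\infty}$ cannot be obtained by differentiating the HJB equation, since that route reintroduces the $|Du|^{2}$ term and degrades the constant, so the dynamic programming comparison must be carried out directly, with care taken to pass from the one-sided difference quotients to the almost-everywhere derivative.
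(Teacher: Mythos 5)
Your proofs of (i) and (ii) are correct. Part (i) is exactly the paper's argument (null control for the upper bound, discarding the kinetic term for the lower bound). Part (ii) takes a genuinely different and more elementary route: the paper constructs the shifted functions $u_{\lambda}(x,t+h)\pm\frac{1-e^{-\lambda h}}{\lambda}\|f\|_{\infty}$ as viscosity super-/sub-solutions and invokes a comparison principle (which requires a Kruzkov transform to handle the zero-order term), whereas your dynamic-programming argument with $\varepsilon$-optimal controls extended by zero, respectively restricted to $[0,t]$, yields the same Lipschitz-in-time bound $|u_{\lambda}(x,t+h)-u_{\lambda}(x,t)|\leq\|f\|_{\infty}\,e^{-\lambda t}\,h$ with no PDE machinery at all. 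That is a real simplification and even gives a marginally sharper constant.

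Part (iii), however, has a genuine gap. You write that ``the preceding estimates make the value functions uniformly Lipschitz in $x$ and $t$,'' but (i) and (ii) say nothing about spatial increments: (i) bounds $u$ itself and (ii) bounds only the time increments. Under Assumption \ref{f: nice} the function $f$ is merely continuous and bounded, not Lipschitz, so no direct comparison of trajectories started at $x$ and $x+h$ produces a Lipschitz-in-$x$ estimate for the value function; you only get a modulus of continuity. Without local Lipschitz continuity in $x$ you cannot invoke Rademacher's theorem, and you cannot invoke the fact that Lipschitz viscosity solutions satisfy the equation pointwise a.e., so the identity $\tfrac12|Du_{\lambda}|^{2}=f-\partial_{t}u_{\lambda}-\lambda u_{\lambda}$ is not available a.e. and the argument is circular: spatial Lipschitz regularity is precisely what (iii) is meant to establish. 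This is exactly why the paper's proof of (iii) has three steps: it mollifies $f$ into $f^{\varepsilon}$ (Lipschitz with constant $C\|f\|_{\infty}/\varepsilon$), shows the approximate value function $u^{\varepsilon}_{\lambda}$ is Lipschitz in $x$ with an $\varepsilon$-dependent constant, which legitimizes the a.e.\ pointwise use of the HJB equation and produces the \emph{uniform} bound $|Du^{\varepsilon}_{\lambda}|\leq\sqrt{6\|f\|_{\infty}}$ (your algebra at this stage, including the sharper $2\sqrt{\|f\|_{\infty}}$ in the stationary and undiscounted cases, coincides with the paper's Step 2), and then passes to the limit $\varepsilon\to0$ using local uniform convergence $u^{\varepsilon}_{\lambda}\to u_{\lambda}$ and stability of viscosity solutions. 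Your proposal is missing this approximation-and-stability scaffolding, and without it the key step does not go through.
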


The proof of this lemma relies on standard methods. For the sake of self-consistency of the paper and for the reader's convenience, we provide the proof in Appendix \ref{appendix}.

An important consequence of the latter result is the object of the next remark.

\begin{remark}\label{rem: bounded controls}
Given the bound $|D u_{\lambda}(x,t)| \leq \sqrt{6\|f\|_{\infty} }$, one can consider bounded controls in \eqref{opt intro} which thus can be equivalently expressed by
\begin{equation}\label{OCP M}
\begin{aligned}
    u_{\lambda}(x,t) = \; 
    \inf\limits_{\alpha(\cdot)} \; & \int_{0}^{t}\; \left(\frac{1}{2}|\alpha(s)|^{2} + f(y_{x}^{\alpha}(s))\right)\, e^{-\lambda s}\;\text{d}s,\\
    & \text{such that }\; \dot{y}_{x}^{\alpha}(s) = \alpha(s),\quad y_{x}^{\alpha}(0)=x\in\mathbb{R}^n\\
    & \text{and the controls } \alpha(\cdot):[0,\infty) \to B_{M} \text{ are measurable. } 
\end{aligned}
\end{equation}
where $B_{M}$ is the closed ball in $\mathbb{R}^{n}$ of radius $M \geq 1+\sqrt{6\|f\|_{\infty}}$. Indeed, the maximization in the Hamiltonian is achieved at an interior point, and one has
\begin{equation*}
\begin{aligned}
    H(x,r,p)
    & =\lambda r+\sup\limits_{\alpha\in B_{M}}\left\{ - \alpha \cdot p - \frac{1}{2}|\alpha|^{2}\right\}-f(x) = \lambda r+\sup\limits_{\alpha\in \mathbb{R}^{n}}\left\{ - \alpha \cdot p - \frac{1}{2}|\alpha|^{2}\right\}-f(x) \\
    & = \lambda r+\frac{1}{2}|p|^{2} - f(x)
\end{aligned}
\end{equation*}
Therefore we can assume without loss of generality that the controls are bounded by $M$, which is a constant that can be made arbitrarily large. 
\end{remark}

\subsection{Occupation measures}\label{sec: occ}

One of the main tools that we shall use to analyze the dynamical properties of the trajectories with respect to global optimization is the occupational measure. 

\begin{definition}\label{def: occ} 
Let $y_{x}^{\alpha}(\cdot)$ be an admissible trajectory starting from $x$ with the control $\alpha(\cdot)$. Let $\mathscr{O}$ be a Borel subset of $\mathbb{R}^{n}$, and  let $y\mapsto\mathds{1}_{\mathscr{O}}(y)$ be the indicator function which is equal to $0$ when $y\notin \mathscr{O}$, and is equal to $1$ otherwise.  We consider three cases for the problem \eqref{opt intro}.
\begin{enumerate}[label = \underline{Case \arabic*}.]
    \item  When $t<\infty$ and $\lambda>0$, we denote by $\Gamma_{t}^{\lambda}[x,\alpha](\cdot)$ the probability measure that is defined for any Borel subset $\mathscr{O}$ of $\mathbb{R}^{n}$ 
    \begin{equation*}
        \Gamma_{t}^{\lambda}[x,\alpha](\mathscr{O}) := \frac{\lambda}{1-e^{-\lambda t}} \int_0^t \mathds{1}_{\mathscr{O}}\left(y_{x}^{\alpha}(s)\right)\, e^{-\lambda s} \,\dd s.
    \end{equation*} 
    Equivalently, for any test function $\phi:\mathbb{R}^{n}\to \mathbb{R}$, for example in $L^{\infty}_{\text{loc}}(\mathbb{R}^{n})$, it holds
    \begin{equation*}
        \int_{\mathbb{R}^{n}} \phi(y) \, \dd \Gamma_{t}^{\lambda}[x,\alpha](y) = \frac{\lambda}{1-e^{-\lambda t}} \int_0^t \phi\left(y_{x}^{\alpha}(s)\right)\, e^{-\lambda s} \, \dd s.
    \end{equation*}
    \item When $t=\infty$ and $\lambda>0$, we denote by $\Gamma^{\lambda}[x,\alpha](\cdot)$  the probability measure that is defined for any Borel subset $\mathscr{O}$ of $\mathbb{R}^{n}$ 
    \begin{equation*}
        \Gamma^{\lambda}[x,\alpha](\mathscr{O}) := \lambda \int_0^{\infty} \mathds{1}_{\mathscr{O}}\left(y_{x}^{\alpha}(s)\right)\, e^{-\lambda s} \,\dd s.
    \end{equation*}
    Equivalently, for any test function $\phi:\mathbb{R}^{n}\to \mathbb{R}$, for example in $L^{\infty}_{\text{loc}}(\mathbb{R}^{n})$, it holds
    \begin{equation*}
        \int_{\mathbb{R}^{n}} \phi(y) \, \dd \Gamma^{\lambda}[x,\alpha](y) = \lambda \int_0^{\infty} \phi\left(y_{x}^{\alpha}(s)\right)\, e^{-\lambda s} \, \dd s.
    \end{equation*}
    \item When $t<\infty$ and $\lambda=0$, we denote by $\Gamma_{t}[x,\alpha](\cdot)$  the probability measure that is defined for any Borel subset $\mathscr{O}$ of $\mathbb{R}^{n}$ 
    \begin{equation*}
        \Gamma_{t}[x,\alpha](\mathscr{O}) := \frac{1}{t} \int_0^{t} \mathds{1}_{\mathscr{O}}\left(y_{x}^{\alpha}(s)\right)\,  \dd s.
    \end{equation*}
    Equivalently, for any test function $\phi:\mathbb{R}^{n}\to \mathbb{R}$, for example in $L^{\infty}_{\text{loc}}(\mathbb{R}^{n})$, it holds
    \begin{equation*}
        \int_{\mathbb{R}^{n}} \phi(y) \, \dd \Gamma_{t}[x,\alpha](y) =\frac{1}{t} \int_0^t \phi\left(y_{x}^{\alpha}(s)\right)\,  \dd s.
    \end{equation*}
\end{enumerate}
\end{definition}

The occupation measures quantify how much time (eventually in a discounted sense, when in the presence of $e^{-\lambda\,s}$) a given trajectory spends within some region of the state space. In particular, they represent the (discounted) fraction of time that the trajectory spends in some region (here $\mathscr{O}$).  The discount factor $e^{-\lambda\, s}$ reflects the idea that near future matters more than distant future: contributions occurring later are exponentially down-weighted, so earlier times count more. Consequently, the discounted time average emphasizes the beginning of the interval and fades as $s$ grows. We now give an equivalent probabilistic interpretation.
\begin{enumerate}[label = \textit{\underline{Case \arabic*}}.]
     \item  When $t<\infty$ and $\lambda>0$, it holds
    \begin{equation*}
    \Gamma_t^\lambda[x, \alpha](\mathscr{O})=\mathbb{P}\left(y_x^\alpha\left(\tau_t\right) \in \mathscr{O}\right) \quad \text { where } \tau_t \sim \operatorname{Exp}(\lambda) \text { conditioned on }\left\{\tau_t \leq t\right\} .
    \end{equation*}
    Here $\tau_t$ is a random variable that is exponentially distributed with parameter $\lambda$, over time interval $[0, t]$, equivalently, it has conditional density $q_{t, \lambda}(s)=\frac{\lambda e^{-\lambda s}}{1-e^{-\lambda t}}$ on $[0, t]$. Thus $\Gamma_t^\lambda[x, \alpha](\cdot)$ is the law of the trajectory evaluated at a discounted random time restricted to $[0, t]$. For any test function $\phi(\cdot)$, one also has
    \begin{equation*}
    \int_{\mathbb{R}^n} \phi(y) \, \dd \Gamma_t^\lambda[x, \alpha](y)=\mathbb{E}\left[\phi\left(y_x^\alpha\left(\tau_t\right)\right)\right] 
    \end{equation*}
    where $\tau_{t} \sim \operatorname{Exp}(\lambda)$ conditioned on $\left\{\tau_t \leq t\right\}$.
    \item When $t=\infty$ and $\lambda>0$, it holds
    \begin{equation*}
        \Gamma^{\lambda}[x,\alpha](\mathscr{O}) = \mathbb{P}\left(y_{x}^{\alpha}(\tau)\in \mathscr{O}\right) \quad \text{ where } \tau\sim \text{Exp}(\lambda).
    \end{equation*}
    Here $\tau$ is a random variable that is exponentially distributed with parameter $\lambda$. Thus $\Gamma^{\lambda}[x,\alpha](\mathscr{O})$ is the expected value of the trajectory at a random time governed by an exponential law. One can also write for a given test function $\phi(\cdot)$
    \begin{equation*}
        \int_{\mathbb{R}^{n}} \phi(y) \, \dd \Gamma^{\lambda}[x,\alpha](y) = \mathbb{E}\left[\phi(y_{x}^{\alpha}(\tau))\right]\quad \text{ where } \tau\sim \text{Exp}(\lambda)
    \end{equation*}
    which shows that $\Gamma^{\lambda}[x,\alpha](\cdot)$ is the law (distribution) of the trajectory when evaluated at an exponentially-distributed random time. 
    \item When $t<\infty$ and $\lambda=0$, it holds 
    \begin{equation*}
        \Gamma_{t}[x,\alpha](\mathscr{O}) = \frac{1}{t} \left|\{\,s\in [0,t]\,:\, y_{x}^{\alpha}(s) \in \mathscr{O}\}\right|
    \end{equation*}
    where $|\{\dots\}|$ denotes Lebesgue measure of an interval in $\mathbb{R}$. This means that $\Gamma_{t}[x,\alpha](\mathscr{O})$ counts how much time (in average) the trajectory spends in $\mathscr{O}$ during the period $[0,t]$. Additionally, the same probabilistic interpretation as in the previous case holds, the difference being that the random time is now uniformly distributed on the interval $[0,t]$. Hence we have
    \begin{equation*}
        \Gamma_{t}[x,\alpha](\mathscr{O}) = \mathbb{P}\left(y_{x}^{\alpha}(\tau)\in \mathscr{O}\right) \quad \text{ where } \tau\sim \text{Unif}([0,t]).
    \end{equation*}
    Here $\tau$ is a random variable that is uniformly distributed on $[0,t]$. Also for a given test function $\phi(\cdot)$, one has
    \begin{equation*}
        \int_{\mathbb{R}^{n}} \phi(y) \, \dd \Gamma_{t}[x,\alpha](y) = \mathbb{E}\left[\phi(y_{x}^{\alpha}(\tau))\right]\quad \text{ where } \tau\sim \text{Unif}([0,t])
    \end{equation*}
    which shows that $\Gamma_{t}[x,\alpha](\cdot)$ is the law (distribution) of the trajectory when evaluated at a uniformly-distributed random time. 
\end{enumerate}

The following lemma shows strong convergence of $\Gamma_{t}^{\lambda}[x,\alpha](\cdot)$, that is in \textit{Total Variation} (TV). Let us first recall the definition of the TV metric on the space of probability measures: given $\mu,\nu\in \mathscr{P}(\mathbb{R}^{n})$ two probability measures, the TV-metric $\|\cdot\|_{TV}$ is defined by
\begin{equation*}
    \|\mu - \nu\|_{TV} := \sup\limits_{\mathscr{O}}|\mu(\mathscr{O}) - \nu(\mathscr{O})|
\end{equation*}
where the supremum is taken over all Borel subsets $\mathscr{O}$ of $\mathbb{R}^{n}$. The induced convergence is 
\begin{equation*}
    \mu_{n} \xrightarrow[n\to \infty]{TV} \mu \quad \text{ when } \quad \|\mu_{n} - \mu\|_{TV} \to 0.
\end{equation*}

\begin{lemma}\label{lem: conv measures}
Let $x\in\mathbb{R}^{n}$ and $\alpha(\cdot)$ a control, both fixed. The following holds
\begin{equation*}
    \Gamma_{t}^{\lambda}[x,\alpha](\cdot) \;\xrightarrow[t\to \infty]{TV} \; \Gamma^{\lambda}[x,\alpha](\cdot) \quad \text{ and } \quad \Gamma_{t}^{\lambda}[x,\alpha](\cdot) \; \xrightarrow[\lambda\to 0]{TV} \; \Gamma_{t}[x,\alpha](\cdot).
\end{equation*}
\end{lemma}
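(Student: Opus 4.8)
The plan is to observe that each of the three occupation measures is the pushforward of an explicit probability measure on the time axis under the (continuous, hence Borel) trajectory map $s\mapsto y_{x}^{\alpha}(s)$, and then to exploit the fact that pushforward is a contraction for the total variation distance. Concretely, I would write $\Gamma_{t}^{\lambda}[x,\alpha]=(y_{x}^{\alpha})_{\#}\,\mu_{t}^{\lambda}$, $\Gamma^{\lambda}[x,\alpha]=(y_{x}^{\alpha})_{\#}\,\mu^{\lambda}$ and $\Gamma_{t}[x,\alpha]=(y_{x}^{\alpha})_{\#}\,\nu_{t}$, where $\mu_{t}^{\lambda}$, $\mu^{\lambda}$, $\nu_{t}$ are the probability measures on $[0,\infty)$ with Lebesgue densities
\[
q_{t,\lambda}(s)=\frac{\lambda e^{-\lambda s}}{1-e^{-\lambda t}}\mathds{1}_{[0,t]}(s),\qquad q_{\lambda}(s)=\lambda e^{-\lambda s},\qquad p_{t}(s)=\frac{1}{t}\mathds{1}_{[0,t]}(s),
\]
respectively. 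Since for any Borel map $T$ and any two probability measures one has $\|T_{\#}\mu-T_{\#}\nu\|_{TV}\le\|\mu-\nu\|_{TV}$ (the preimage $T^{-1}(\mathscr{O})$ is again Borel, so the supremum defining the left-hand side ranges over a subfamily of the sets available on the right), both claimed convergences reduce to one-dimensional estimates for the densities above, which are completely independent of $x$, of $\alpha(\cdot)$, and of the test set $\mathscr{O}$. This independence is exactly what upgrades weak convergence to strong (TV) convergence.

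For the first limit I would compute $\|\mu_{t}^{\lambda}-\mu^{\lambda}\|_{TV}$ directly. The sign of $q_{t,\lambda}-q_{\lambda}$ is constant on $[0,t]$ (positive) and on $(t,\infty)$ (negative), so with the convention $\|\mu-\nu\|_{TV}=\tfrac12\int|p-q|$ valid for probability densities,
\[
\|\mu_{t}^{\lambda}-\mu^{\lambda}\|_{TV}=\tfrac12\left(\frac{e^{-\lambda t}}{1-e^{-\lambda t}}\int_{0}^{t}\lambda e^{-\lambda s}\,\dd s+\int_{t}^{\infty}\lambda e^{-\lambda s}\,\dd s\right)=e^{-\lambda t},
\]
which tends to $0$ as $t\to\infty$; the contraction property then yields $\|\Gamma_{t}^{\lambda}-\Gamma^{\lambda}\|_{TV}\le e^{-\lambda t}$. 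Equivalently, one may bound $|\Gamma_{t}^{\lambda}(\mathscr{O})-\Gamma^{\lambda}(\mathscr{O})|$ uniformly in $\mathscr{O}$ by splitting $\int_{0}^{\infty}=\int_{0}^{t}+\int_{t}^{\infty}$ and using $0\le\int_{0}^{t}\mathds{1}_{\mathscr{O}}\,e^{-\lambda s}\,\dd s\le(1-e^{-\lambda t})/\lambda$.

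For the second limit I would show uniform convergence of the densities on the compact interval $[0,t]$. Writing $q_{t,\lambda}(s)=\tfrac1t\,c_{\lambda}\,e^{-\lambda s}$ with $c_{\lambda}=\lambda t/(1-e^{-\lambda t})\to1$ as $\lambda\to0$, and using $|e^{-\lambda s}-1|\le\lambda t$ for $s\in[0,t]$, one gets $\sup_{s\in[0,t]}|c_{\lambda}e^{-\lambda s}-1|\le c_{\lambda}\lambda t+|c_{\lambda}-1|\to0$. Hence
\[
\|\mu_{t}^{\lambda}-\nu_{t}\|_{TV}=\tfrac12\int_{0}^{t}\left|q_{t,\lambda}(s)-\tfrac1t\right|\dd s\le\tfrac12\big(c_{\lambda}\lambda t+|c_{\lambda}-1|\big)\xrightarrow[\lambda\to0]{}0,
\]
and the contraction property gives $\|\Gamma_{t}^{\lambda}-\Gamma_{t}\|_{TV}\to0$. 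The calculus here is elementary; the only point that genuinely requires care is the reduction itself, namely verifying that the pushforward does not increase total variation so that the set-dependent supremum in the definition of $\|\cdot\|_{TV}$ can be discarded in favour of the explicit time-axis densities. I expect this conceptual step, rather than any of the estimates, to be where attention should be focused, since it is precisely what makes the convergence hold in the strong TV sense and uniformly over all initial data and controls.
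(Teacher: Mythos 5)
Your proof is correct, but it is organized differently from the paper's. The paper works directly on $\mathbb{R}^{n}$: for an arbitrary Borel set $\mathscr{O}$ it writes out $\Gamma_{t}^{\lambda}[x,\alpha](\mathscr{O})-\Gamma^{\lambda}[x,\alpha](\mathscr{O})$ (resp.\ $-\Gamma_{t}[x,\alpha](\mathscr{O})$), splits the time integrals, bounds the indicator $\mathds{1}_{\mathscr{O}}$ by $1$, and obtains a bound independent of $\mathscr{O}$ — namely $2e^{-\lambda t}$ in the first case and $\left|\frac{\lambda t}{1-e^{-\lambda t}}-1\right|+\left|1-\frac{1-e^{-\lambda t}}{\lambda t}\right|$ in the second. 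You instead factor the problem through the time axis: you identify each occupation measure as the pushforward under $s\mapsto y_{x}^{\alpha}(s)$ of an explicit one-dimensional law, invoke the TV-contraction property of pushforwards, and then compute the one-dimensional TV distances exactly via the identity $\|\mu-\nu\|_{TV}=\tfrac12\int|p-q|$ (which matches the paper's normalization $\sup_{\mathscr{O}}|\mu(\mathscr{O})-\nu(\mathscr{O})|$, so no factor-of-two mismatch arises). The two routes rest on the same elementary integral estimates, but yours buys a conceptual dividend and a sharper constant: the first convergence comes out as $\|\Gamma_{t}^{\lambda}-\Gamma^{\lambda}\|_{TV}\le e^{-\lambda t}$ (the one-dimensional distance is exactly $e^{-\lambda t}$, versus the paper's $2e^{-\lambda t}$), and the uniformity in $x$, $\alpha$, and $\mathscr{O}$ is visibly structural rather than an outcome of the estimate. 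What the paper's version buys in exchange is self-containedness: it uses nothing beyond direct manipulation of the defining integrals, whereas your argument quietly relies on two standard facts (Scheff\'e-type identity for densities, and that Borel preimages $T^{-1}(\mathscr{O})$ form a subfamily of Borel sets so pushforward cannot increase the supremum) — both true and both routine, but worth stating explicitly, as you indeed do for the contraction step.
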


The proof of this lemma can be found in Appendix \ref{appendix 2}.

In the particular situation where the occupational measure is evaluated on sets close to the global minimizers, we shall use the following simpler notations. Let $\delta\geq 0$ be fixed, and define the set of quasi-minimizers (or quasi-optimal sub-level set) 
\begin{equation}\label{eq: set K_delta}
    K_{\delta} := \{\,y\in\mathbb{R}^{n}\;|\; f(y) \leq \underline{f}\,+\delta\,\}
\end{equation}
where we recall $\underline{f} := \min\limits_{x\in\mathbb{R}^n}f(x)$. Clearly, when $\delta = 0$ one has $K_{0}=\mathfrak{M}$ as defined in \ref{f: has min}. Hence we denote the occupational measures evaluated in $K_{\delta}^{c}$, the complement of $K_{\delta}$, by
\begin{equation*}
    \mu_{t}^{\lambda}[x,\alpha](\delta) := \Gamma_{t}^{\lambda}[x,\alpha](K_{\delta}^{c}), \quad \mu^{\lambda}[x,\alpha](\delta) := \Gamma^{\lambda}[x,\alpha](K_{\delta}^{c}), \quad \mu_{t}[x,\alpha](\delta) := \Gamma_{t}[x,\alpha](K_{\delta}^{c}).
\end{equation*}

These probability measures record how often an admissible trajectory $\{s\mapsto y_{x}^{\alpha}(s) \, : \, \dot{y}_{x}^{\alpha}(\cdot) = \alpha(\cdot),\, y_{x}^{\alpha}(0)=x \}$ remains outside of a neighborhood $K_{\delta}$ of the set of global minimizers $\mathfrak{M}$. An application of Lemma \ref{lem: conv measures} yields
\begin{equation}\label{eq: conv mu}
    \mu_{t}^{\lambda}[x,\alpha](\delta) \xrightarrow[t \to \infty]{} \mu^{\lambda}[x,\alpha](\delta) \quad \text{ and } \quad \mu_{t}^{\lambda}[x,\alpha](\delta) \xrightarrow[\lambda \to 0]{} \mu_{t}[x,\alpha](\delta).
\end{equation}

\subsection{Trajectories}

We shall verify that the problem \eqref{OCP M} admits an optimal control-trajectory. Bearing in mind the conclusion of Remark \ref{rem: bounded controls}, we recall the set of admissible controls  
\begin{equation}\label{admissible control set}
    \mathcal{A} := \{\, \alpha(\cdot):[0,\infty)\to \mathbb{R}^{n}\,:\, \text{ measurable,} \, |\alpha(s)|\leq M \; \forall\,s \,\}.
\end{equation}
Consequently, the admissible trajectories are those whose speed (time derivative) is an element of $\mathcal{A}$
\begin{equation}\label{admissible state set}
    \mathcal{Y} := \{\, y(\cdot):[0,t]\to \mathbb{R}^{n}\,:\, \dot{y}(\cdot) \in \mathcal{A},\; y(0)\in \mathbb{R}^{n} \,\}.
\end{equation}
The running cost functional is defined on $\mathbb{R}^{n}\times\mathcal{A}$ such that
\begin{equation}\label{cost function}
    \mathscr{J}(x,\alpha(\cdot)) := \int_{0}^{t} \left( \frac{1}{2}|\alpha(s)|^{2} + f(y_{x}(s)) \right)\,e^{-\lambda\,s}\,\dd s \quad \text{ where } \dot{y}(\cdot) \equiv \alpha(\cdot), \quad y_{x}(0)=x\in\mathbb{R}^{n}
\end{equation}

\begin{theorem}\label{thm: existence}
Under Assumption \ref{f: nice}, there exists an optimal solution to \eqref{OCP M} in the three cases $(t<\infty, \lambda>0)$, $(t=\infty, \lambda>0)$, and $(t<\infty, \lambda=0)$.
\end{theorem}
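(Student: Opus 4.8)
The plan is to apply the direct method of the calculus of variations (Tonelli's method) to each of the three cases, exploiting the fact that by Remark~\ref{rem: bounded controls} we may restrict a priori to controls valued in the fixed closed ball $B_M$. I would begin with the finite-horizon discounted case ($t<\infty$, $\lambda>0$). First, observe that the infimum is finite: by Assumption~\ref{f: nice} the running cost obeys $\tfrac12|\alpha|^2 + f(y) \geq \underline f$, so $\mathscr{J}(x,\alpha(\cdot)) \geq \underline f\,(1-e^{-\lambda t})/\lambda$ is bounded below, while the choice $\alpha\equiv 0$ gives an admissible competitor of finite cost. Next, take a minimizing sequence $(\alpha_n(\cdot))_n \subset \mathcal{A}$ with associated trajectories $y_n(\cdot)$. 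Since $|\alpha_n(s)|\leq M$, the sequence is bounded in $L^2([0,t];\mathbb{R}^n)$, so up to a subsequence $\alpha_n \rightharpoonup \alpha^*$ weakly in $L^2$; because $B_M$ is closed and convex, the set of $B_M$-valued controls is strongly closed and convex, hence weakly closed, so $\alpha^*\in\mathcal{A}$. The trajectories $y_n(s)=x+\int_0^s \alpha_n(r)\,\mathrm{d}r$ are equi-Lipschitz (constant $M$) and equibounded on $[0,t]$, so by Arzelà--Ascoli, passing to a further subsequence, $y_n \to y^*$ uniformly. Testing the weak convergence of $\alpha_n$ against $\mathds{1}_{[0,s]}\in L^2([0,t])$ lets me pass to the limit in the integral representation, yielding $y^*(s)=x+\int_0^s \alpha^*(r)\,\mathrm{d}r$; thus $\dot y^*=\alpha^*$ and $y^*$ is exactly the trajectory generated by the admissible control $\alpha^*$.

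The decisive step is the lower semicontinuity of the cost along this limit, and it is here that I expect the only genuine difficulty. The potential term is harmless: since $y_n\to y^*$ uniformly and $f$ is continuous and bounded, dominated convergence gives $\int_0^t f(y_n(s))e^{-\lambda s}\,\mathrm{d}s \to \int_0^t f(y^*(s))e^{-\lambda s}\,\mathrm{d}s$. The obstacle is the quadratic control term $\int_0^t \tfrac12|\alpha_n(s)|^2 e^{-\lambda s}\,\mathrm{d}s$, for which only weak convergence of $\alpha_n$ is available and which may increase in the limit. I would resolve this by convexity: the functional $\alpha(\cdot)\mapsto \int_0^t \tfrac12|\alpha(s)|^2 e^{-\lambda s}\,\mathrm{d}s$ is convex and strongly continuous on $L^2$, hence weakly lower semicontinuous, so $\liminf_n \int_0^t \tfrac12|\alpha_n|^2 e^{-\lambda s}\,\mathrm{d}s \geq \int_0^t \tfrac12|\alpha^*|^2 e^{-\lambda s}\,\mathrm{d}s$. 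Combining the two contributions yields $\mathscr{J}(x,\alpha^*)\leq \liminf_n \mathscr{J}(x,\alpha_n)=u_\lambda(x,t)$, and since $\alpha^*\in\mathcal{A}$, it is optimal. (This convexity is precisely the structural hypothesis underlying the Filippov--Cesari theorem, which could be invoked as an alternative: the extended velocity set $\{(\alpha,\eta):|\alpha|\leq M,\ \eta\geq \tfrac12|\alpha|^2+f(y)\}$ is convex.)

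The non-discounted finite-horizon case ($\lambda=0$, $t<\infty$) is verbatim the same argument with $e^{-\lambda s}\equiv 1$. For the stationary discounted case ($t=\infty$, $\lambda>0$), the only additional ingredient is a diagonal extraction. On each interval $[0,T]$ the compactness above produces weak and uniform limits, and a diagonal argument over $T\to\infty$ yields a single admissible limit control $\alpha^*$ on $[0,\infty)$ with trajectory $y^*$. The discount factor, together with the uniform bounds $|\alpha_n|\leq M$ and $|f|\leq \|f\|_\infty$, makes the tail of the cost uniformly negligible, since $\int_T^\infty (\tfrac12 M^2 + \|f\|_\infty)\,e^{-\lambda s}\,\mathrm{d}s \to 0$ as $T\to\infty$. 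Hence the lower-semicontinuity estimate established on each $[0,T]$ passes to the full horizon, and the same conclusion that $\alpha^*$ is optimal follows.
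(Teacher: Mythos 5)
Your proposal is correct and follows essentially the same direct-method argument as the paper: minimizing sequence, weak compactness of the bounded controls, Arzel\`a--Ascoli for the equi-Lipschitz trajectories, identification of the limit trajectory, weak lower semicontinuity of the quadratic term via convexity, and dominated convergence for the potential term. The only cosmetic differences are that you use weak $L^2$ compactness where the paper invokes Banach--Alaoglu in $L^\infty$ (weak-$*$), and for the infinite-horizon case you give a diagonal extraction with an explicit tail estimate $\int_T^\infty\bigl(\tfrac12 M^2+\|f\|_\infty\bigr)e^{-\lambda s}\,\mathrm{d}s\to 0$, which is in fact a more careful treatment than the paper's brief remark that the same proof works with locally uniform convergence of trajectories.
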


\begin{proof}
We write the proof when $(t<\infty, \lambda>0)$. It follows the standard strategy in optimal control or calculus of variations: Starting from a minimizing sequence, we invoke weak-$*$ compactness to extract a converging subsequence, whose limit is in the admissible set when the latter is closed. Then we verify lower semicontinuity of the cost functional, before passing to the limit. Throughout the proof, the initial position $x\in\mathbb{R}^{n}$ will remain fixed, and thus we will omit the subscript $x$ when denoting the trajectories. 

Let us consider a minimizing sequence $\{\alpha_{k}(\cdot)\}\subset \mathcal{A}$ that is
\begin{equation*}
    \lim\limits_{k\to \infty} \mathscr{J}(x,\alpha_{k}(\cdot)) = \inf\limits_{\alpha(\cdot)} \mathscr{J}(x,\alpha(\cdot)) = u_{\lambda}(x,t).
\end{equation*}
Since $\|\alpha_{k}\|_{\infty}\leq M$, the admissible set of controls $\mathcal{A}$ is compact in the weak-$*$ topology. thus we can use Banach-Alaoglu theorem to extract a converging subsequence (again denoted by $\alpha_{k}$) such that $\alpha_{k} \xrightharpoonup{*} \alpha_{*}$, and $\alpha_{*}\in \mathcal{A}$ since the latter is closed under weak-$*$ convergence. 

The corresponding sequence of trajectories $\{y_{k}(\cdot)\}\subset \mathcal{Y}$ is $\;y_{k}(s) := x + \int_{0}^{s} \alpha_{k}(r)\,\dd r.\;$ 
These are equicontinuous and uniformly bounded. Hence Ascoli-Arzel\`a theorem ensures (eventually after extracting a subsequence) that $y_{k}\to y_{*}$ uniformly on $[0,t]$, and $\;y_{*}(s) = x + \int_{0}^{s} \alpha_{*}(r)\,\dd r.$

Now we check the lower semicontinuity. Recalling $\alpha_{k} \xrightharpoonup{*} \alpha_{*}$ in $L^{\infty}$, and since $L^{\infty}\subset L^{2}$, we also have the weak convergence $\alpha_{k} \rightharpoonup \alpha_{*}$ in $L^{2}$. Observe that the map $\alpha(\cdot) \mapsto \int_{0}^{t} |\alpha(s)|^{2}\, e^{-\lambda\,s} \,\dd s$ is a convex integral functional, hence  weakly lower semicontinuous in $L^{2}$ i.e.
\begin{equation*}
    \int_{0}^{t} |\alpha_{*}(s)|^{2}\, e^{-\lambda\,s}\,\dd s \leq \liminf\limits_{k\to \infty} \int_{0}^{t} |\alpha_{k}(s)|^{2}\, e^{-\lambda\,s}\,\dd s.
\end{equation*}
On the other hand, $f(\cdot)$ being continuous and $y_{k}\to y_{*}$ uniformly imply $f(y_{k}(s)) \to f(y_{*}(s))$ pointwise. Moreover, $f(\cdot)$ being bounded, we can apply dominated convergence theorem and obtain
\begin{equation*}
    \int_{0}^{t} f(y_{k}(s))\,e^{-\lambda\, s}\,\dd s \to \int_{0}^{t} f(y_{*}(s))\,e^{-\lambda\, s}\,\dd s.
\end{equation*}
Finally, it holds
\begin{equation*}
    \mathscr{J}(x,\alpha_{*}) = \int_{0}^{t} \left(\frac{1}{2}|\alpha_{*}(s)|^{2} + f(y_{*}(s))\right)\, e^{-\lambda\,s}\,\dd s \leq \liminf\limits_{k\to \infty} \int_{0}^{t} \left(\frac{1}{2}|\alpha_{k}(s)|^{2} + f(y_{k}(s))\right)\, e^{-\lambda\,s}\,\dd s 
\end{equation*}
that is $\mathscr{J}(x,\alpha_{*}) \leq u_{\lambda}(x,t)$, and $(\alpha_{*}(\cdot), y_{*}(\cdot))$ is an optimal control-trajectory.

When $(t=\infty, \lambda>0)$, or $(t<\infty, \lambda=0)$, the same proof remains valid. The only difference is in the case where $t=\infty$: 
the convergence $y_{k}\to y_{*}$ should be locally-uniformly on $[0,\infty)$.
\end{proof}

While the existence of optimal trajectories can be guaranteed under standard conditions as in the previous theorem, their practical computation is often hindered by the limitations of numerical approximation methods 
and potential sensitivity to modeling errors. To address this, we also consider quasi-optimal trajectories, which provide a tractable alternative and are defined as follows.

\begin{definition}\label{def: quasi}
Let $\varepsilon>0$ be fixed and $\alpha_{\varepsilon}(\cdot)$ be an admissible control. The corresponding trajectory $y_{x}^{\alpha_{\varepsilon}}(\cdot)$ is called quasi-optimal (or $\varepsilon$-optimal) if it satisfies the following.
\begin{enumerate}[label = \underline{Case \arabic*}.]
    \item When $t<\infty$ and $\lambda>0$ in \eqref{opt intro}, it holds
    \begin{equation*}
        \int_{0}^{t}\; \left(\frac{1}{2}|\alpha_{\varepsilon}(s)|^{2} + f(y_{x}^{\alpha_{\varepsilon}}(s))\right)\, e^{-\lambda s}\;\text{d}s \leq u_{\lambda}(x,t)+ \varepsilon.
    \end{equation*}
    \item When $t=\infty$ and $\lambda>0$ in \eqref{opt intro}, it holds
    \begin{equation*}
        \int_{0}^{\infty}\; \left(\frac{1}{2}|\alpha_{\varepsilon}(s)|^{2} + f(y_{x}^{\alpha_{\varepsilon}}(s))\right)\, e^{-\lambda s}\;\text{d}s \leq u_{\lambda}(x)+ \varepsilon.
    \end{equation*}
    \item When $t<\infty$ and $\lambda=0$ in \eqref{opt intro}, it holds
    \begin{equation*}
        \int_{0}^{t}\; \frac{1}{2}|\alpha_{\varepsilon}(s)|^{2} + f(y_{x}^{\alpha_{\varepsilon}}(s))\,\text{d}s \leq u(x,t)+ \varepsilon.
    \end{equation*}
\end{enumerate}
\end{definition}

Clearly, when the above inequalities are satisfied with $\varepsilon=0$, the trajectory is then optimal. 

We are now ready to state and prove our main results in the subsequent sections. 

\section{Reachability of the set of global minimizers}\label{sec: reach}

\subsection{The finite-horizon discounted problem}

Recalling the control problem \eqref{opt intro}, and bearing in mind the conclusion of Remark \ref{rem: bounded controls}, we shall consider in this subsection the case where $t<\infty$ and $\lambda>0$. The optimal control problem is then
\begin{equation}\label{OCP time lambda M}
\begin{aligned}
    u_{\lambda}(x,t) = \; 
    \inf\limits_{\alpha(\cdot)} \; & \int_{0}^{t}\; \left(\frac{1}{2}|\alpha(s)|^{2} + f(y_{x}^{\alpha}(s))\right)\, e^{-\lambda s}\;\text{d}s,\\
    & \text{such that }\; \dot{y}_{x}^{\alpha}(s) = \alpha(s),\quad y_{x}^{\alpha}(0)=x\in\mathbb{R}^n\\
    & \text{and the controls } \alpha(\cdot):[0,\infty) \to B_{M} \text{ are measurable. } 
\end{aligned}
\end{equation}

Let us be given $\varepsilon>0$ fixed, and consider an $\varepsilon$-optimal trajectory $y_{x}^{\alpha_{\varepsilon}}(\cdot)$ for the problem \eqref{OCP time lambda M}, as in \textit{Case 1} of Definition \ref{def: quasi}. Recall the occupation measure
\begin{equation*}
    \mu_{t}^{\lambda}[x,\alpha_{\varepsilon}](\delta) = \Gamma_{t}^{\lambda}[x,\alpha_{\varepsilon}](K_{\delta}^{c}) \; \in [0,1]
\end{equation*}
where $K_{\delta}$ is the set of quasi-minimizers defined in \eqref{eq: set K_delta}, and $\Gamma_{t}^{\lambda}[x,\alpha](\cdot)$ is in \textit{Case 1} of Definition \ref{def: occ}. 

The following theorem provides a bound on the fraction of time that an $\varepsilon$-optimal trajectory spends away from the set of global minimizers.

\begin{theorem}\label{thm: stab lambda t}
Let Assumptions \ref{f: nice} and \ref{f: has min} hold. 
Let $\varepsilon>0$ be given, and $y_{x}^{\alpha_{\varepsilon}}(\cdot)$ be an $\varepsilon$-optimal trajectory for \eqref{OCP time lambda M} starting from $x$. Define $R:= \sqrt{6\|f\|_{\infty}}$. Then the corresponding occupational measure satisfies
\begin{equation*}
    \mu_{t}^{\lambda}[x,\alpha_{\varepsilon}](\delta) \leq \frac{\lambda}{1-e^{-\lambda t}}\left(\frac{R \operatorname{dist}(x,\mathfrak{M}) + \varepsilon}{\delta}\right) \quad\quad \forall\; \delta>0.
\end{equation*}
In particular, when $\varepsilon=0$, i.e. for an optimal trajectory $y_{x}^{\alpha_{*}}(\cdot)$, one obtains 
\begin{equation*}
    \mu_{t}^{\lambda}[x,\alpha_{*}](\delta) \leq \frac{\lambda}{1-e^{-\lambda t}}\,\frac{R \operatorname{dist}(x,\mathfrak{M})}{\delta} \quad\quad \forall\; \delta>0.
\end{equation*}
\end{theorem}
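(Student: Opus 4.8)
The plan is to combine a Chebyshev/Markov-type inequality for the occupation measure with the Lipschitz estimate already established in Lemma~\ref{lem: bounds}(iii), the guiding observation being that the constant $R = \sqrt{6\|f\|_\infty}$ appearing in the statement is precisely the spatial Lipschitz constant of $u_\lambda(\cdot,t)$.

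First I would exploit the definition of $K_\delta$. Since $f \ge \underline{f}$ everywhere and $f(y) > \underline{f} + \delta$ exactly on $K_\delta^c$, the pointwise bound $\mathds{1}_{K_\delta^c}(y) \le (f(y) - \underline{f})/\delta$ holds for every $y \in \mathbb{R}^n$. Integrating this against the discounted weight that defines $\Gamma_t^\lambda[x,\alpha_\varepsilon]$ gives immediately
\begin{equation*}
\mu_t^\lambda[x,\alpha_\varepsilon](\delta) \;\le\; \frac{\lambda}{\delta\,(1 - e^{-\lambda t})} \int_0^t \big(f(y_x^{\alpha_\varepsilon}(s)) - \underline{f}\big)\, e^{-\lambda s}\,\dd s,
\end{equation*}
so the whole problem reduces to bounding the discounted integral of $f - \underline{f}$ along the trajectory. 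Next I would control that integral using $\varepsilon$-optimality: dropping the nonnegative kinetic term $\tfrac12|\alpha_\varepsilon|^2$ and invoking Case~1 of Definition~\ref{def: quasi},
\begin{equation*}
\int_0^t f(y_x^{\alpha_\varepsilon}(s))\, e^{-\lambda s}\,\dd s \;\le\; u_\lambda(x,t) + \varepsilon,
\end{equation*}
and subtracting $\underline{f}\int_0^t e^{-\lambda s}\,\dd s = \underline{f}\,(1-e^{-\lambda t})/\lambda$ reduces the task to an upper bound on $u_\lambda(x,t)$.

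The crux of the argument, and the step I expect to require the most care, is this upper bound on $u_\lambda(x,t)$, since it is where $R\,\operatorname{dist}(x,\mathfrak{M})$ must be produced. I would fix a nearest minimizer $x^\ast \in \mathfrak{M}$ with $|x - x^\ast| = \operatorname{dist}(x,\mathfrak{M})$ (attained because $\mathfrak{M}$ is closed, nonempty by \ref{f: has min}, and one may intersect with a closed ball). Lemma~\ref{lem: bounds}(iii) makes $u_\lambda(\cdot,t)$ globally $R$-Lipschitz, whence $u_\lambda(x,t) \le u_\lambda(x^\ast,t) + R\,\operatorname{dist}(x,\mathfrak{M})$; and testing the control problem at $x^\ast$ with the constant control $\alpha \equiv 0$ (which keeps the trajectory at $x^\ast$, where $f(x^\ast) = \underline{f}$) yields $u_\lambda(x^\ast,t) \le \underline{f}\,(1-e^{-\lambda t})/\lambda$. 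Substituting, the two copies of $\underline{f}\,(1-e^{-\lambda t})/\lambda$ cancel and leave
\begin{equation*}
\int_0^t \big(f(y_x^{\alpha_\varepsilon}(s)) - \underline{f}\big)\, e^{-\lambda s}\,\dd s \;\le\; R\,\operatorname{dist}(x,\mathfrak{M}) + \varepsilon,
\end{equation*}
which, combined with the first display, is exactly the asserted bound; taking $\varepsilon = 0$ recovers the optimal-trajectory statement.

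The only genuinely technical points are the passage from the almost-everywhere gradient bound of Lemma~\ref{lem: bounds}(iii) to a global Lipschitz constant for $u_\lambda(\cdot,t)$, and the attainment of the nearest minimizer $x^\ast$; both are standard. I do not anticipate any obstruction beyond bookkeeping, and the same scheme should transfer verbatim to \ref{case 2} and \ref{case 3} by replacing the discounted weight with the appropriate normalization in each occupation measure.
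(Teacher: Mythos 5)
Your proposal is correct and follows essentially the same route as the paper's proof: the Markov-type bound $\mathds{1}_{K_\delta^c} \le (f-\underline{f})/\delta$ is just a repackaging of the paper's splitting of the integral over $K_\delta$ and $K_\delta^c$, and the key step — bounding $u_\lambda(x,t) - \underline{f}\,(1-e^{-\lambda t})/\lambda$ by $R\,\operatorname{dist}(x,\mathfrak{M})$ via the Lipschitz estimate of Lemma \ref{lem: bounds}(iii) and the zero control at a minimizer — is identical (the paper takes an infimum over all $z\in\mathfrak{M}$ rather than invoking attainment of a nearest point, which makes your closedness remark unnecessary but changes nothing).
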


\begin{proof}
Let $y_{x}^{\alpha_{\varepsilon}}(\cdot)$ be an $\varepsilon$-optimal trajectory for the problem \eqref{OCP time lambda M}. Thus it holds
\begin{equation*}
    \int_{0}^{t}\; f(y_{x}^{\alpha_{\varepsilon}}(s))\, e^{-\lambda s}\;\dd s  
     \leq \int_{0}^{t}\; \left(\frac{1}{2}|\alpha_{\varepsilon}(s)|^{2} + f(y_{x}^{\alpha_{\varepsilon}}(s))\right)\, e^{-\lambda s}\;\dd s 
    \leq u_{\lambda}(x,t)+ \varepsilon
\end{equation*}
Recalling $\underline{f} = \min_{z\in\mathbb{R}^{n}} f(z)$, we subtract $\underline{f}$ inside the integral and obtain 
\begin{equation}\label{eq: bound in proof of reach}
\begin{aligned}
    \int_{0}^{t}\; \left(f(y_{x}^{\alpha_{\varepsilon}}(s)) - \underline{f}\right)\, e^{-\lambda s}\;\dd s  
    & \leq u_{\lambda}(x,t)-\frac{1-e^{-\lambda t}}{\lambda}\,\underline{f} + \varepsilon.
\end{aligned}
\end{equation}
Next we split the integral in the left hand-side 
\begin{equation*}
\begin{aligned}
    & \int_{0}^{t}\; \left(f(y_{x}^{\alpha_{\varepsilon}}(s)) - \underline{f}\right)\, e^{-\lambda s}\;\dd s\\
    &\quad  = \int_{0}^{t}\; \mathds{1}_{K_{\delta}}(y_{x}^{\alpha_{\varepsilon}}(s))\left(f(y_{x}^{\alpha_{\varepsilon}}(s)) - \underline{f}\right) e^{-\lambda s}\dd s + \int_{0}^{t}\; \mathds{1}_{K_{\delta}^{c}}(y_{x}^{\alpha_{\varepsilon}}(s))\left(f(y_{x}^{\alpha_{\varepsilon}}(s)) - \underline{f}\right) e^{-\lambda s}\dd s.
\end{aligned}
\end{equation*}
The first integral is always non-negative by definition of $\underline{f}$. For the second integral, when $y_{x}^{\alpha_{\varepsilon}}(s)\in K_{\delta}^{c}$, we have $f(y_{x}^{\alpha_{\varepsilon}}(s)) - \underline{f} > \delta$. Therefore we have
\begin{equation*}
    \int_{0}^{t}\; \left(f(y_{x}^{\alpha_{\varepsilon}}(s)) - \underline{f}\right)\, e^{-\lambda s}\;\dd s \geq \delta\; \int_{0}^{t}\; \mathds{1}_{K_{\delta}^{c}}(y_{x}^{\alpha_{\varepsilon}}(s))\; e^{-\lambda s}\;\dd s = \delta\; \frac{1-e^{-\lambda t}}{\lambda} \;\mu_{t}^{\lambda}[x,\alpha_{\varepsilon}](\delta).
\end{equation*}
Using the latter lower-bound in \eqref{eq: bound in proof of reach} yields
\begin{equation}\label{eq: almost there}
    \delta\; \frac{1-e^{-\lambda t}}{\lambda} \;\mu_{t}^{\lambda}[x,\alpha_{\varepsilon}](\delta) \leq \left(u_{\lambda}(x,t)-\frac{1-e^{-\lambda t}}{\lambda}\,\underline{f}\right) + \varepsilon.
\end{equation}
Let us now consider an arbitrary element $z\in \mathfrak{M}$ whose existence is guaranteed thanks to assumption \ref{f: has min}. It can be easily verified that $u_{\lambda}(z,t) = \frac{1-e^{-\lambda t}}{\lambda}\,\underline{f}\,$ which corresponds to an optimal control $\alpha_{*}(\cdot)\equiv 0$. Therefore we have
\begin{equation*}
    u_{\lambda}(x,t)-\frac{1-e^{-\lambda t}}{\lambda}\,\underline{f} = u_{\lambda}(x,t)- u_{\lambda}(z,t) \leq R\, |x-z|
\end{equation*}
where $R:= \sqrt{6\|f\|_{\infty}}$ is the bound in \textit{(iii)} of Lemma \ref{lem: bounds}. On the other hand, recalling $\underline{f} \leq f(x)$, one gets for any $\alpha(\cdot)$ 
\begin{equation}
    \frac{1-e^{-\lambda t}}{\lambda} \; \underline{f}\; \leq  \int_{0}^{t}\; \left(\frac{1}{2}|\alpha(s)|^{2} + f(y_{x}^{\alpha}(s))\right)\, e^{-\lambda s}\;\text{d}s.
\end{equation}
Taking the infimum over $\alpha(\cdot)$ yields $\frac{1-e^{-\lambda t}}{\lambda}\,\underline{f} \leq u_{\lambda}(x,t)$. 
Hence we have
\begin{equation*}
    \left|u_{\lambda}(x,t)-\frac{1-e^{-\lambda t}}{\lambda}\,\underline{f}\right| \leq R\, |x-z|
\end{equation*}
The latter holds regardless of the point $z\in \mathfrak{M}$. Consequently we have
\begin{equation*}
    \left|u_{\lambda}(x,t)-\frac{1-e^{-\lambda t}}{\lambda}\,\underline{f}\right| \leq R\, \operatorname{dist}(x,\mathfrak{M}).
\end{equation*}
Going back to \eqref{eq: almost there}, one finally gets
\begin{equation*}
    \mu_{t}^{\lambda}[x,\alpha_{\varepsilon}](\delta) \leq \frac{R}{\delta}\, \operatorname{dist}(x,\mathfrak{M}) \, \frac{\lambda}{1-e^{-\lambda t}} + \frac{\varepsilon}{\delta}\,\frac{\lambda}{1-e^{-\lambda t}}.
\end{equation*}
\end{proof}

\begin{corollary}\label{cor: stab lambda t}
In the situation of Theorem \ref{thm: stab lambda t}, if additionally assumption \ref{f: stab} holds, then for all $\eta>0$, there exist a large $t>0$, a small $\lambda>0$, and some $s\in [0,t]$ such that
\begin{equation*}
    \operatorname{dist}\left(y_{x}^{\alpha_{\varepsilon}}(s),\mathfrak{M}\right)\leq \eta.
\end{equation*}
The same result is satisfied by an optimal trajectory. 
\end{corollary}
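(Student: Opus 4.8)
The plan is to argue by contraposition, combining the quantitative bound of Theorem \ref{thm: stab lambda t} with the strict energy gap supplied by Assumption \ref{f: stab}. First I would fix $\eta>0$ and apply Assumption \ref{f: stab} with $\delta=\eta$ to obtain $\gamma:=\gamma(\eta)>0$ with the property that $\operatorname{dist}(y,\mathfrak{M})>\eta$ implies $f(y)-\underline{f}>\gamma$. Reading this contrapositively, every point with $f(y)\leq \underline{f}+\gamma$, that is, every $y\in K_{\gamma}$, satisfies $\operatorname{dist}(y,\mathfrak{M})\leq \eta$. Consequently, it suffices to produce a single time $s\in[0,t]$ at which $y_{x}^{\alpha_{\varepsilon}}(s)\in K_{\gamma}$.

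Next I would observe that if, on the contrary, $y_{x}^{\alpha_{\varepsilon}}(s)\in K_{\gamma}^{c}$ for \emph{every} $s\in[0,t]$, then $\mathds{1}_{K_{\gamma}^{c}}(y_{x}^{\alpha_{\varepsilon}}(s))\equiv 1$, so the occupation measure attains its maximal value
\begin{equation*}
    \mu_{t}^{\lambda}[x,\alpha_{\varepsilon}](\gamma) = \frac{\lambda}{1-e^{-\lambda t}} \int_{0}^{t} e^{-\lambda s}\,\dd s = 1.
\end{equation*}
On the other hand, Theorem \ref{thm: stab lambda t} applied with $\delta=\gamma$ gives
\begin{equation*}
    \mu_{t}^{\lambda}[x,\alpha_{\varepsilon}](\gamma) \leq \frac{\lambda}{1-e^{-\lambda t}}\left(\frac{R\operatorname{dist}(x,\mathfrak{M})+\varepsilon}{\gamma}\right).
\end{equation*}
These two are incompatible as soon as the right-hand side of the latter is strictly below $1$, which I would then arrange by a suitable choice of parameters.

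The remaining point is that the prefactor $\tfrac{\lambda}{1-e^{-\lambda t}}$ can be made arbitrarily small. Writing $\tfrac{\lambda}{1-e^{-\lambda t}}=\tfrac{1}{t}\cdot\tfrac{\lambda t}{1-e^{-\lambda t}}$ and using $\tfrac{\lambda t}{1-e^{-\lambda t}}\to 1$ as $\lambda\to 0$, one sees that for fixed $t$ this prefactor tends to $1/t$. Hence I would first fix $t$ large enough that $1/t < \gamma/(R\operatorname{dist}(x,\mathfrak{M})+\varepsilon)$, and then take $\lambda>0$ small enough that $\tfrac{\lambda}{1-e^{-\lambda t}} < \gamma/(R\operatorname{dist}(x,\mathfrak{M})+\varepsilon)$ as well. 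With this pair $(\lambda,t)$ the bound forces $\mu_{t}^{\lambda}[x,\alpha_{\varepsilon}](\gamma)<1$, contradicting the value $1$ computed above. Therefore $\int_{0}^{t}\mathds{1}_{K_{\gamma}}(y_{x}^{\alpha_{\varepsilon}}(s))\,e^{-\lambda s}\,\dd s>0$, so $\{\,s:y_{x}^{\alpha_{\varepsilon}}(s)\in K_{\gamma}\,\}$ has positive Lebesgue measure and is in particular nonempty; at any such $s$ the contrapositive form of Assumption \ref{f: stab} yields $\operatorname{dist}(y_{x}^{\alpha_{\varepsilon}}(s),\mathfrak{M})\leq\eta$. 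The case $\varepsilon=0$ is identical, using the optimal-trajectory bound of the theorem.

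I do not expect a serious obstacle here; the argument is essentially a pigeonhole between the two expressions for $\mu_{t}^{\lambda}[x,\alpha_{\varepsilon}](\gamma)$. The only points requiring care are the quantification over $(\lambda,t)$ — making the prefactor drop below the fixed threshold $\gamma/(R\operatorname{dist}(x,\mathfrak{M})+\varepsilon)$ — and the final passage from ``$\mu<1$'' to an \emph{honest} time $s$ rather than a mere almost-everywhere statement, which is handled by the positive-measure remark above together with continuity of $y_{x}^{\alpha_{\varepsilon}}(\cdot)$.
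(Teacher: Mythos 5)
Your proposal is correct and follows essentially the same route as the paper's own proof: both use Assumption \ref{f: stab} to pass from $\operatorname{dist}>\eta$ to membership in $K_{\gamma(\eta)}^{c}$, observe that the contradiction hypothesis forces $\mu_{t}^{\lambda}[x,\alpha_{\varepsilon}](\gamma(\eta))=1$, and then contradict the bound of Theorem \ref{thm: stab lambda t} by making $\tfrac{\lambda}{1-e^{-\lambda t}}$ small (the paper's sample choice is $t=1/\lambda$ with $\lambda$ small, while you fix $t$ large first and then shrink $\lambda$ — an equally valid tuning).
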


This result shows that one can always reach a neighborhood of the set of global minimizers with a margin $\eta>0$ that can be made arbitrarily small, provided the parameters $t$ and $\lambda$ are well  tuned. More importantly, this reachability is possible even with quasi-optimal trajectories, hence there is no need in practice for computing the exact optimal solutions of the control problem \eqref{OCP time lambda M}. In other words, quasi-optimality in the control problem \eqref{OCP time lambda M} allows for optimality in the optimization problem $\min_{z\in\mathbb{R}^{n}}f(z)$. 

\begin{remark}
An example of how small $\lambda$ or how large $t$ could be chosen is the following
\begin{equation}
    \label{config: lambda t}
    0<t = \frac{1}{\lambda} \quad \text{ and } \quad 0<\lambda \leq \frac{1-e^{-1}}{2}\left(\frac{R \operatorname{dist}(x,\mathfrak{M}) + \varepsilon}{\gamma(\eta)}\right)^{-1}.
\end{equation}
This example serves to illustrate a possible scenario rather than to prescribe choices of $\lambda$ and $t$. 
This also highlights the difference between choosing $\varepsilon$-optimal trajectories or optimal ones: in the former case (with $\varepsilon>0$) $\lambda$ needs to be smaller than in the latter case (where $\varepsilon=0$).
\end{remark}

\begin{proof}[Proof of Corollary \ref{cor: stab lambda t}] 
We proceed by contradiction. Suppose that there exists $\eta>0$ such that for all $t>0$ and $\lambda>0$, one has $\operatorname{dist}\left(y_{x}^{\alpha_{\varepsilon}}(s),\mathfrak{M}\right)> \eta$ for all $s\in [0,t]$. Then using assumption \ref{f: stab}, there exists $\gamma(\cdot)>0$ such that
\begin{equation}\label{eq: contradiction gamma}
     f(y_{x}^{\alpha_{\varepsilon}}(s)) - \underline{f} \geq 
     \inf\{f(x) - \underline{f}\,:\,\text{dist}(x,\mathfrak{M})\,> \eta\} \,>\, \gamma(\eta) \quad \quad \forall\,s\in [0,t]
\end{equation}
Consider the set of quasi-minimizers (see \eqref{eq: set K_delta}) corresponding to $\gamma(\eta)$
\begin{equation*}
    K_{\gamma(\eta)} = \{y\in \mathbb{R}^{n}\,:\, f(y) \leq \underline{f}+\gamma(\delta)\} \quad \text{ and its complement } K_{\gamma(\delta)}^{c} = \mathbb{R}^{n}\setminus K_{\gamma(\delta)}. 
\end{equation*}
Then from \eqref{eq: contradiction gamma} it holds $\mathds{1}_{K_{\gamma(\eta)}^{c}}(y_{x}^{\alpha_{\varepsilon}}(s)) = 1$ for all $s\in [0,t]$. Consequently the occupational measure evaluated in $\gamma(\eta)$ satisfies $\mu_{t}^{\lambda}[x,\alpha_{\varepsilon}](\gamma(\eta)) = 1$. Recalling the result of Theorem \ref{thm: stab lambda t}, one gets
\begin{equation*}
    1 \leq \frac{\lambda}{1-e^{-\lambda t}}\left(\frac{R \operatorname{dist}(x,\mathfrak{M}) + \varepsilon}{\gamma(\eta)}\right).
\end{equation*}
However when $\lambda$ is small and $t$ is large, one gets a contradiction. An example of such configuration is the one in \eqref{config: lambda t}, which then yields $\frac{\lambda}{1-e^{-\lambda t}}\left(\frac{R \operatorname{dist}(x,\mathfrak{M}) + \varepsilon}{\gamma(\eta)}\right) \leq \frac{1}{2}$.
\end{proof}

\subsection{The infinite-horizon discounted problem}

Similarly to the previous subsection, we shall now consider the case where $t=\infty$ and $\lambda>0$ in \eqref{opt intro}. The resulting optimal control problem is then
\begin{equation}\label{OCP lambda}
\begin{aligned}
    u_{\lambda}(x) = \; 
    \inf\limits_{\alpha(\cdot)} \; & \int_{0}^{\infty}\; \left(\frac{1}{2}|\alpha(s)|^{2} + f(y_{x}^{\alpha}(s))\right)\, e^{-\lambda s}\;\text{d}s,\\
    & \text{such that }\; \dot{y}_{x}^{\alpha}(s) = \alpha(s),\quad y_{x}^{\alpha}(0)=x\in\mathbb{R}^n\\
    & \text{and the controls } \alpha(\cdot):[0,\infty) \to B_{M} \text{ are measurable. } 
\end{aligned}
\end{equation}

Let us be given $\varepsilon>0$ fixed, and consider an $\varepsilon$-optimal trajectory $y_{x}^{\alpha_{\varepsilon}}(\cdot)$ for the problem \eqref{OCP lambda}, as in \textit{Case 2} of Definition \ref{def: quasi}. Recall the occupation measure
\begin{equation*}
    \mu^{\lambda}[x,\alpha_{\varepsilon}](\delta) = \Gamma^{\lambda}[x,\alpha_{\varepsilon}](K_{\delta})\; \in [0,1]
\end{equation*}
where $K_{\delta}$ is the set of quasi-minimizers defined in \eqref{eq: set K_delta}, and $\Gamma^{\lambda}[x,\alpha](\cdot)$ is in \textit{Case 2} of Definition \ref{def: occ}.

The following results can be established either by adapting the arguments from the previous subsection, mutatis mutandis, or by appealing to the result in the preliminary section (in particular \eqref{eq: conv mu} and Lemma \ref{lem: conv measures}) in conjunction with the results in the previous subsection. 

\begin{theorem}\label{thm: stab lambda}
Let Assumptions \ref{f: nice} and \ref{f: has min} hold. 
Let $\varepsilon>0$ be given, and $y_{x}^{\alpha_{\varepsilon}}(\cdot)$ be an $\varepsilon$-optimal trajectory for \eqref{OCP lambda} starting from $x$. Define $R:= \sqrt{6\|f\|_{\infty}}$. Then the corresponding occupational measure satisfies
\begin{equation*}
    \mu^{\lambda}[x,\alpha_{\varepsilon}](\delta) \leq \lambda\left(\frac{R \operatorname{dist}(x,\mathfrak{M}) + \varepsilon}{\delta}\right) \quad\quad \forall\; \delta>0.
\end{equation*}
In particular, when $\varepsilon=0$, i.e. for an optimal trajectory $y_{x}^{\alpha_{*}}(\cdot)$, one obtains 
\begin{equation*}
    \mu^{\lambda}[x,\alpha_{*}](\delta) \leq \lambda\,\frac{R \operatorname{dist}(x,\mathfrak{M})}{\delta} \quad\quad \forall\; \delta>0.
\end{equation*}
\end{theorem}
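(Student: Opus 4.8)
The plan is to reproduce the argument of Theorem \ref{thm: stab lambda t} almost verbatim, replacing the finite horizon by the infinite one; the only structural change is that the normalizing constant $\frac{1-e^{-\lambda t}}{\lambda}$ gets replaced by $\int_{0}^{\infty} e^{-\lambda s}\,\dd s = \frac{1}{\lambda}$, all integrals converging because $f$ is bounded (Assumption \ref{f: nice}) and $e^{-\lambda s}$ is integrable on $[0,\infty)$. First I would invoke $\varepsilon$-optimality in \textit{Case 2} of Definition \ref{def: quasi} and discard the nonnegative kinetic term $\frac12|\alpha_{\varepsilon}|^{2}$ to obtain
\[
\int_{0}^{\infty} f(y_{x}^{\alpha_{\varepsilon}}(s))\,e^{-\lambda s}\,\dd s \le u_{\lambda}(x)+\varepsilon .
\]
Subtracting $\underline{f}$ and using $\int_{0}^{\infty} e^{-\lambda s}\,\dd s=\tfrac1\lambda$ gives
\[
\int_{0}^{\infty}\bigl(f(y_{x}^{\alpha_{\varepsilon}}(s))-\underline{f}\bigr)e^{-\lambda s}\,\dd s \le u_{\lambda}(x)-\tfrac1\lambda\,\underline{f}+\varepsilon .
\]

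Next I would split the left-hand integral over $K_{\delta}$ and $K_{\delta}^{c}$. The part over $K_{\delta}$ is nonnegative by definition of $\underline{f}$, and on $K_{\delta}^{c}$ one has $f-\underline{f}>\delta$, so the left-hand side is bounded below by $\delta\int_{0}^{\infty}\mathds{1}_{K_{\delta}^{c}}(y_{x}^{\alpha_{\varepsilon}}(s))\,e^{-\lambda s}\,\dd s=\tfrac{\delta}{\lambda}\,\mu^{\lambda}[x,\alpha_{\varepsilon}](\delta)$, directly from the definition of the occupation measure in \textit{Case 2} of Definition \ref{def: occ}. This yields $\tfrac{\delta}{\lambda}\,\mu^{\lambda}[x,\alpha_{\varepsilon}](\delta)\le u_{\lambda}(x)-\tfrac1\lambda\,\underline{f}+\varepsilon$.

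The remaining step is to control $u_{\lambda}(x)-\tfrac1\lambda\,\underline{f}$. Fixing $z\in\mathfrak{M}$ (nonempty by \ref{f: has min}), the constant control $\alpha\equiv0$ keeps the trajectory at $z$ and gives cost $\tfrac1\lambda\,\underline{f}$; combined with the universal lower bound $u_{\lambda}(\cdot)\ge\tfrac1\lambda\,\underline{f}$ (since $f\ge\underline{f}$ and the kinetic term is nonnegative), this gives $u_{\lambda}(z)=\tfrac1\lambda\,\underline{f}$. The Lipschitz bound $|Du_{\lambda}|\le R$ from part \textit{(iii)} of Lemma \ref{lem: bounds} then implies $0\le u_{\lambda}(x)-\tfrac1\lambda\,\underline{f}=u_{\lambda}(x)-u_{\lambda}(z)\le R\,|x-z|$, and minimizing over $z\in\mathfrak{M}$ produces $\bigl|u_{\lambda}(x)-\tfrac1\lambda\,\underline{f}\bigr|\le R\operatorname{dist}(x,\mathfrak{M})$. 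Substituting back and multiplying by $\tfrac{\lambda}{\delta}$ yields the claimed bound, with the optimal case $\varepsilon=0$ as an immediate specialization.

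The one genuinely new point—which I regard as the main obstacle—is the identity $u_{\lambda}(z)=\tfrac1\lambda\,\underline{f}$ on $\mathfrak{M}$, i.e. that staying at a global minimizer is actually optimal over the infinite horizon; this rests on the universal lower bound, which is precisely where \ref{f: has min} enters. As an alternative I would note that one could instead pass to the limit $t\to\infty$ in Theorem \ref{thm: stab lambda t} using \eqref{eq: conv mu}, since $\tfrac{\lambda}{1-e^{-\lambda t}}\to\lambda$; however this route is delicate, because an $\varepsilon$-optimal control for the infinite-horizon problem \eqref{OCP lambda} need not be $\varepsilon$-optimal for the finite-horizon problem \eqref{OCP time lambda M}, so the finite-horizon estimate does not apply to the same trajectory without further adjustment of $\varepsilon$. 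For this reason I would favour the direct argument above.
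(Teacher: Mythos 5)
Your proof is correct and is essentially the paper's own route: the paper gives no separate proof of this theorem, stating only that it follows by adapting the argument of Theorem \ref{thm: stab lambda t} mutatis mutandis (with the normalization $\frac{1-e^{-\lambda t}}{\lambda}$ replaced by $\frac{1}{\lambda}$), which is precisely what you carried out, including the key identity $u_{\lambda}(z)=\frac{1}{\lambda}\underline{f}$ on $\mathfrak{M}$ and the Lipschitz bound from Lemma \ref{lem: bounds}\textit{(iii)}. Your caveat about the alternative limiting route via \eqref{eq: conv mu} (that an $\varepsilon$-optimal control for \eqref{OCP lambda} is only $\varepsilon'$-optimal for \eqref{OCP time lambda M} with $\varepsilon'=\varepsilon+O(e^{-\lambda t}/\lambda)$, so the tolerance must be adjusted before passing to the limit) is a valid observation consistent with the paper's second suggested route.
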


This theorem shows that the fraction of time that a (quasi-)optimal trajectory spends away from the set of global minimizers is $\mathcal{O}(\lambda)$.

\begin{corollary}\label{cor: stab lambda}
In the situation of Theorem \ref{thm: stab lambda}, if additionally assumption \ref{f: stab} holds, then for all $\eta>0$, there exist a small $\lambda>0$, and some $s\in [0,\infty)$ such that
\begin{equation*}
    \operatorname{dist}\left(y_{x}^{\alpha_{\varepsilon}}(s),\mathfrak{M}\right)\leq \eta.
\end{equation*}
The same result is satisfied by an optimal trajectory.
\end{corollary}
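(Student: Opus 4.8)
The plan is to mirror the contradiction argument used for Corollary \ref{cor: stab lambda t}, now exploiting that $\Gamma^{\lambda}[x,\alpha_{\varepsilon}]$ is a \emph{probability} measure over the infinite horizon, so its mass on $K_{\gamma(\eta)}^{c}$ cannot exceed $1$. I would argue by contradiction: suppose there exists $\eta>0$ such that for every $\lambda>0$ the quasi-optimal trajectory satisfies $\operatorname{dist}(y_{x}^{\alpha_{\varepsilon}}(s),\mathfrak{M})>\eta$ for all $s\in[0,\infty)$.

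First I would invoke Assumption \ref{f: stab} to obtain $\gamma(\eta)>0$ with $f(y_{x}^{\alpha_{\varepsilon}}(s))-\underline{f}>\gamma(\eta)$ for all $s$, so the trajectory lives entirely in $K_{\gamma(\eta)}^{c}$ and hence $\mathds{1}_{K_{\gamma(\eta)}^{c}}(y_{x}^{\alpha_{\varepsilon}}(s))=1$ for every $s\geq 0$. Then, using the definition of $\Gamma^{\lambda}$ in \textit{Case 2} of Definition \ref{def: occ} together with the normalization $\lambda\int_{0}^{\infty}e^{-\lambda s}\,\dd s=1$, the occupation measure evaluates to
\begin{equation*}
    \mu^{\lambda}[x,\alpha_{\varepsilon}](\gamma(\eta)) = \Gamma^{\lambda}[x,\alpha_{\varepsilon}](K_{\gamma(\eta)}^{c}) = 1.
\end{equation*}

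Next I would feed this into the bound of Theorem \ref{thm: stab lambda} with $\delta=\gamma(\eta)$, which yields
\begin{equation*}
    1 \leq \lambda\left(\frac{R\operatorname{dist}(x,\mathfrak{M})+\varepsilon}{\gamma(\eta)}\right).
\end{equation*}
Since the right-hand side tends to $0$ as $\lambda\to 0$, choosing $\lambda$ sufficiently small (for instance $\lambda<\gamma(\eta)/(2(R\operatorname{dist}(x,\mathfrak{M})+\varepsilon))$, which makes the right-hand side at most $\tfrac12$) contradicts the inequality. This forces the existence of some $s\in[0,\infty)$ with $\operatorname{dist}(y_{x}^{\alpha_{\varepsilon}}(s),\mathfrak{M})\leq\eta$. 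The optimal-trajectory case follows verbatim upon setting $\varepsilon=0$.

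I do not anticipate a genuine obstacle: the argument is structurally identical to Corollary \ref{cor: stab lambda t}, with the simplification that only $\lambda$ must be tuned, there being no horizon $t$ to send to infinity. The one point requiring minor care is the normalization $\mu^{\lambda}[x,\alpha_{\varepsilon}](\gamma(\eta))=1$, which hinges precisely on $\Gamma^{\lambda}$ being a probability measure over $[0,\infty)$. As a secondary route, one could instead recover the conclusion from the finite-horizon case by passing $t\to\infty$ via \eqref{eq: conv mu} and Lemma \ref{lem: conv measures}, observing that $\frac{\lambda}{1-e^{-\lambda t}}\to\lambda$ as $t\to\infty$; I would keep the direct contradiction as the primary proof.
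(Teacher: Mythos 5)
Your proof is correct and follows exactly the route the paper prescribes: it is the \emph{mutatis mutandis} adaptation of the contradiction argument for Corollary \ref{cor: stab lambda t}, using $\mu^{\lambda}[x,\alpha_{\varepsilon}](\gamma(\eta))=1$ against the bound of Theorem \ref{thm: stab lambda}, with the simplification that only $\lambda$ needs tuning. Your secondary route via \eqref{eq: conv mu} and Lemma \ref{lem: conv measures} is precisely the alternative the paper also mentions, so nothing is missing.
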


\subsection{The finite-horizon un-discounted problem}

We now treat the case where $t<\infty$ and $\lambda=0$ in \eqref{opt intro}. The resulting optimal control problem is then
\begin{equation}\label{OCP time}
\begin{aligned}
    u(x,t) = \; 
    \inf\limits_{\alpha(\cdot)} \; & \int_{0}^{t}\; \frac{1}{2}|\alpha(s)|^{2} + f(y_{x}^{\alpha}(s)) \;\text{d}s,\\
    & \text{such that }\; \dot{y}_{x}^{\alpha}(s) = \alpha(s),\quad y_{x}^{\alpha}(0)=x\in\mathbb{R}^n\\
    & \text{and the controls } \alpha(\cdot):[0,\infty) \to B_{M} \text{ are measurable. } 
\end{aligned}
\end{equation}

Let us be given $\varepsilon>0$ fixed, and consider an $\varepsilon$-optimal trajectory $y_{x}^{\alpha_{\varepsilon}}(\cdot)$ for the problem \eqref{OCP time}, as in \textit{Case 3} of Definition \ref{def: quasi}. Recall the occupation measure
\begin{equation*}
    \mu_{t}[x,\alpha_{\varepsilon}](\delta) = \Gamma_{t}[x,\alpha_{\varepsilon}](K_{\delta})\; \in [0,1]
\end{equation*}
where $K_{\delta}$ is the set of quasi-minimizers defined in \eqref{eq: set K_delta}, and $\Gamma_{t}[x,\alpha](\cdot)$ is in \textit{Case 3} of Definition \ref{def: occ}.

Recalling again \eqref{eq: conv mu}, the following results can be obtained as a consequence of Theorem \ref{thm: stab lambda t} and Corollary \ref{cor: stab lambda t}, or directly by adapting their proofs to the present setting. 

\begin{theorem}\label{thm: stab t}
Let Assumptions \ref{f: nice} and \ref{f: has min} hold. 
Let $\varepsilon>0$ be given, and $y_{x}^{\alpha_{\varepsilon}}(\cdot)$ be an $\varepsilon$-optimal trajectory for \eqref{OCP time} starting from $x$. Define $R:= \sqrt{6\|f\|_{\infty}}$. Then the corresponding occupational measure satisfies
\begin{equation*}
    \mu_{t}[x,\alpha_{\varepsilon}](\delta) \leq \frac{1}{t}\left(\frac{R \operatorname{dist}(x,\mathfrak{M}) + \varepsilon}{\delta}\right) \quad\quad \forall\; \delta>0.
\end{equation*}
In particular, when $\varepsilon=0$, i.e. for an optimal trajectory $y_{x}^{\alpha_{*}}(\cdot)$, one obtains 
\begin{equation*}
    \mu_{t}[x,\alpha_{*}](\delta) \leq \frac{1}{t}\,\frac{R \operatorname{dist}(x,\mathfrak{M})}{\delta} \quad\quad \forall\; \delta>0.
\end{equation*}
\end{theorem}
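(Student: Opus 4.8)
The plan is to replicate the argument of Theorem \ref{thm: stab lambda t} almost verbatim, tracking the single structural change that the discount weight $e^{-\lambda s}$ collapses to $1$ and, correspondingly, the normalizing factor $\tfrac{1-e^{-\lambda t}}{\lambda}$ is replaced by its $\lambda\to 0$ limit $t$. First I would start from the $\varepsilon$-optimality of $\alpha_\varepsilon(\cdot)$ for \eqref{OCP time} and discard the nonnegative kinetic term $\tfrac12|\alpha_\varepsilon|^2$ to obtain $\int_0^t f(y_x^{\alpha_\varepsilon}(s))\,\dd s \le u(x,t)+\varepsilon$. Subtracting $t\underline f$ from both sides then yields $\int_0^t\big(f(y_x^{\alpha_\varepsilon}(s))-\underline f\big)\,\dd s \le u(x,t)-t\underline f+\varepsilon$.

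Next I would split the left-hand integral according to whether $y_x^{\alpha_\varepsilon}(s)\in K_\delta$ or $K_\delta^c$. The contribution over $K_\delta$ is nonnegative because $f\ge\underline f$, while on $K_\delta^c$ one has $f(y_x^{\alpha_\varepsilon}(s))-\underline f>\delta$ by the definition of $K_\delta$ in \eqref{eq: set K_delta}. This gives the lower bound $\int_0^t (f-\underline f)\,\dd s \ge \delta\int_0^t \mathds 1_{K_\delta^c}(y_x^{\alpha_\varepsilon}(s))\,\dd s = \delta\, t\,\mu_t[x,\alpha_\varepsilon](\delta)$, where the last equality is just the definition of $\Gamma_t[x,\alpha_\varepsilon]$ from \textit{Case 3} of Definition \ref{def: occ}. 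Combining, $\delta\, t\,\mu_t[x,\alpha_\varepsilon](\delta)\le u(x,t)-t\underline f+\varepsilon$.

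The remaining step is to control $u(x,t)-t\underline f$ by $R\operatorname{dist}(x,\mathfrak M)$. For any $z\in\mathfrak M$ (nonempty by \ref{f: has min}) the constant control $\alpha\equiv 0$ keeps the trajectory at $z$ with running cost $f(z)=\underline f$, and this is optimal since $f\ge\underline f$ and the kinetic term is nonnegative; hence $u(z,t)=t\underline f$. The spatial Lipschitz bound $|Du(x,t)|\le R$ from \textit{(iii)} of Lemma \ref{lem: bounds} gives $u(x,t)-u(z,t)\le R|x-z|$, while $f\ge\underline f$ yields $t\underline f\le u(x,t)$; together these bound $|u(x,t)-t\underline f|\le R|x-z|$, and minimizing over $z\in\mathfrak M$ produces $|u(x,t)-t\underline f|\le R\operatorname{dist}(x,\mathfrak M)$. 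Substituting into the previous inequality and dividing by $\delta\,t$ gives the claimed estimate, and setting $\varepsilon=0$ recovers the optimal case.

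I would expect no genuine obstacle here: the argument is a faithful transcription of the discounted proof. The one point deserving care is the alternative route hinted at in the text, namely passing to the limit $\lambda\to 0$ in Theorem \ref{thm: stab lambda t} using the convergence $\mu_t^\lambda[x,\alpha](\delta)\to\mu_t[x,\alpha](\delta)$ from \eqref{eq: conv mu} together with $\tfrac{\lambda}{1-e^{-\lambda t}}\to\tfrac1t$. This is tempting but subtle, because $\varepsilon$-optimality is defined relative to the functional being minimized, so a trajectory that is $\varepsilon$-optimal for the undiscounted problem \eqref{OCP time} need not be $\varepsilon$-optimal for the discounted problem \eqref{OCP time lambda M}; hence Theorem \ref{thm: stab lambda t} does not apply directly to $\alpha_\varepsilon$ along a sequence $\lambda\to 0$. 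For this reason I would carry out the direct adaptation above rather than the limiting argument.
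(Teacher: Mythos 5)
Your proof is correct and follows precisely the route the paper itself endorses: the paper states this theorem without a separate proof, remarking it can be obtained either via the $\lambda\to 0$ limit using \eqref{eq: conv mu} or by directly adapting the proof of Theorem \ref{thm: stab lambda t}, and your argument is exactly that adaptation, with $e^{-\lambda s}$ replaced by $1$ and the normalization $\tfrac{1-e^{-\lambda t}}{\lambda}$ replaced by $t$, including the key identity $u(z,t)=t\underline{f}$ for $z\in\mathfrak{M}$ and the Lipschitz bound from Lemma \ref{lem: bounds}\textit{(iii)}. Your closing caveat, that the limiting route is delicate because $\varepsilon$-optimality for \eqref{OCP time} does not immediately transfer to quasi-optimality for \eqref{OCP time lambda M}, is a legitimate observation and a sound reason to prefer the direct adaptation.
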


The latter result shows that the fraction of time that a (quasi-)optimal trajectory spends away from the set of global minimizers is $\mathcal{O}(1/t)$.

\begin{corollary}\label{cor: stab t}
In the situation of Theorem \ref{thm: stab t}, if additionally assumption \ref{f: stab} holds, then for all $\eta>0$, there exist a large $t>0$, and some $s\in [0,t]$ such that
\begin{equation*}
    \operatorname{dist}\left(y_{x}^{\alpha_{\varepsilon}}(s),\mathfrak{M}\right)\leq \eta.
\end{equation*}
The same result is satisfied by an optimal trajectory.
\end{corollary}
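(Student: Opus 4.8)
The plan is to mirror the contradiction argument used for Corollary \ref{cor: stab lambda t}, replacing the discounted prefactor $\frac{\lambda}{1-e^{-\lambda t}}$ by the simpler factor $\frac{1}{t}$ supplied by Theorem \ref{thm: stab t}. Since only the horizon $t$ is now a free parameter (there is no discount to tune), the argument becomes slightly cleaner: I only need to send $t\to\infty$ rather than simultaneously adjusting $\lambda$ and $t$.

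Concretely, I would argue by contradiction. Suppose the conclusion fails, i.e.\ there is some $\eta>0$ such that for every $t>0$ the associated $\varepsilon$-optimal trajectory $y_{x}^{\alpha_{\varepsilon}}(\cdot)$ for \eqref{OCP time} satisfies $\operatorname{dist}(y_{x}^{\alpha_{\varepsilon}}(s),\mathfrak{M})>\eta$ for all $s\in[0,t]$. Assumption \ref{f: stab} applied with $\delta=\eta$ then furnishes $\gamma(\eta)>0$ with $f(y_{x}^{\alpha_{\varepsilon}}(s))-\underline{f}>\gamma(\eta)$ for every $s\in[0,t]$; equivalently, the trajectory never enters the quasi-minimizer set $K_{\gamma(\eta)}$ of \eqref{eq: set K_delta}, so $\mathds{1}_{K_{\gamma(\eta)}^{c}}(y_{x}^{\alpha_{\varepsilon}}(s))=1$ for all $s\in[0,t]$, and hence $\mu_{t}[x,\alpha_{\varepsilon}](\gamma(\eta))=1$.

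Next I would invoke Theorem \ref{thm: stab t} with the choice $\delta=\gamma(\eta)$, which gives
\begin{equation*}
    1 = \mu_{t}[x,\alpha_{\varepsilon}](\gamma(\eta)) \leq \frac{1}{t}\left(\frac{R\operatorname{dist}(x,\mathfrak{M})+\varepsilon}{\gamma(\eta)}\right).
\end{equation*}
Because the numerator $R\operatorname{dist}(x,\mathfrak{M})+\varepsilon$ is a fixed constant independent of $t$, choosing any $t>\frac{R\operatorname{dist}(x,\mathfrak{M})+\varepsilon}{\gamma(\eta)}$ forces the right-hand side strictly below $1$, a contradiction. This yields the existence of some large $t$ (and some $s\in[0,t]$) for which $\operatorname{dist}(y_{x}^{\alpha_{\varepsilon}}(s),\mathfrak{M})\leq\eta$. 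The optimal case $\varepsilon=0$ follows verbatim using the second bound in Theorem \ref{thm: stab t}.

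I do not anticipate a genuine obstacle, since the structure is identical to Corollary \ref{cor: stab lambda t}. The only point requiring a little care is that the $\varepsilon$-optimal trajectory $y_{x}^{\alpha_{\varepsilon}}(\cdot)$ itself depends on the horizon $t$ through the definition of \eqref{OCP time}; this causes no difficulty, however, because the estimate of Theorem \ref{thm: stab t} holds for each such trajectory with the \emph{same} $t$-independent constant in the numerator, so the $1/t$ decay still produces the contradiction uniformly in the choice of trajectory.
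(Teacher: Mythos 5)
Your proposal is correct and follows essentially the same route the paper intends: the paper states that Corollary \ref{cor: stab t} follows ``directly by adapting'' the proof of Corollary \ref{cor: stab lambda t}, which is exactly the contradiction argument you carry out, with the prefactor $\frac{\lambda}{1-e^{-\lambda t}}$ replaced by $\frac{1}{t}$ and the conclusion forced by taking $t$ large. Your remark that the trajectory itself depends on $t$ but the numerator $R\operatorname{dist}(x,\mathfrak{M})+\varepsilon$ does not is a correct and worthwhile clarification that the paper leaves implicit.
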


\section{Stability of the set of global minimizers}\label{sec: stab}

This section concerns Lyapunov stability of (quasi-)optimal trajectories. Mainly, we would like to show that when a trajectory starts close to the set $\mathfrak{M}$ of global minimizers, it remains as such forever. 

\subsection{The finite-horizon discounted problem}

Recall the optimal control problem \eqref{OCP time lambda M}. 

\begin{lemma}\label{lem: lower bound mu 1}
Let Assumptions \textbf{(A)} be satisfied. 
Let $y_{x}^{\alpha_{\varepsilon}}(\cdot)$ be an $\varepsilon$-optimal trajectory for the control problem \eqref{OCP time lambda M}, and $\delta>0$ fixed. If there exists $\tau \in \,]0,t[$ such that $\operatorname{dist}\left(y_{x}^{\alpha_{\varepsilon}}(\tau), \mathfrak{M}\right) > \delta$, then 
\begin{equation*}
    \mu_{t}^{\lambda}[x,\alpha_{\varepsilon}]\big(\gamma(\delta/2)\big) \geq \lambda \frac{e^{-\lambda \tau}}{1-e^{-\lambda t}}\frac{\delta}{M} \quad \quad \forall\, \lambda>0,\, t\in \;]0,\infty[
\end{equation*}
where $\gamma(\cdot)$ is the one in Assumption \ref{f: stab}.
\end{lemma}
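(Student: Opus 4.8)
The plan is to combine the finite speed of propagation of the admissible dynamics with Assumption~\ref{f: stab}: the strict separation between $\mathfrak{M}$ and the rest of space lets me turn a single instant $\tau$ at which $y_x^{\alpha_\varepsilon}$ is far from $\mathfrak{M}$ into a whole time-interval on which the trajectory is certifiably outside $K_{\gamma(\delta/2)}$, after which I simply integrate the discounted indicator over that interval.

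First I would use that $\dot y_x^{\alpha_\varepsilon}=\alpha_\varepsilon$ with $|\alpha_\varepsilon|\le M$, so the trajectory is $M$-Lipschitz and $|y_x^{\alpha_\varepsilon}(s)-y_x^{\alpha_\varepsilon}(\tau)|\le M|s-\tau|$. Setting $a:=\delta/(2M)$, any $s$ with $|s-\tau|\le a$ satisfies $|y_x^{\alpha_\varepsilon}(s)-y_x^{\alpha_\varepsilon}(\tau)|\le \delta/2$, whence the reverse triangle inequality for the distance to a set gives
\[
\operatorname{dist}\!\left(y_x^{\alpha_\varepsilon}(s),\mathfrak{M}\right)\ge \operatorname{dist}\!\left(y_x^{\alpha_\varepsilon}(\tau),\mathfrak{M}\right)-\left|y_x^{\alpha_\varepsilon}(s)-y_x^{\alpha_\varepsilon}(\tau)\right| > \delta-\tfrac{\delta}{2}=\tfrac{\delta}{2}
\]
for every $s\in[\tau-a,\tau+a]$.

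Next I would feed this into Assumption~\ref{f: stab} with threshold $\delta/2$: since $\operatorname{dist}(y_x^{\alpha_\varepsilon}(s),\mathfrak{M})>\delta/2$, it holds $f(y_x^{\alpha_\varepsilon}(s))-\underline{f}>\gamma(\delta/2)$, i.e. $y_x^{\alpha_\varepsilon}(s)\in K_{\gamma(\delta/2)}^{c}$, so that $\mathds{1}_{K_{\gamma(\delta/2)}^{c}}(y_x^{\alpha_\varepsilon}(s))=1$ throughout $[\tau-a,\tau+a]$. Inserting this into the definition $\mu_{t}^{\lambda}[x,\alpha_\varepsilon](\gamma(\delta/2))=\Gamma_{t}^{\lambda}[x,\alpha_\varepsilon](K_{\gamma(\delta/2)}^{c})$ bounds it below by $\frac{\lambda}{1-e^{-\lambda t}}\int_{\tau-a}^{\tau+a}e^{-\lambda s}\,\dd s$. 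The proof then closes with the elementary estimate $\int_{\tau-a}^{\tau+a}e^{-\lambda s}\,\dd s\ge 2a\,e^{-\lambda\tau}=\frac{\delta}{M}\,e^{-\lambda\tau}$, which follows from convexity of $s\mapsto e^{-\lambda s}$ (its mean over a symmetric interval dominates its value at the midpoint $\tau$), equivalently from $\sinh(\lambda a)\ge\lambda a$; this yields exactly the asserted bound.

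The step I expect to be the main obstacle is the endpoint bookkeeping: the midpoint/convexity estimate delivers the full factor $\delta/M$ only when the symmetric window $[\tau-a,\tau+a]$ is contained in $[0,t]$. If $\tau$ lies within $a=\delta/(2M)$ of $0$ or of $t$, the window is truncated and the one-sided integral yields only $\tfrac{\delta}{2M}e^{-\lambda\tau}$, losing a factor of two. I would therefore make explicit (or verify in the regime where the lemma is later invoked, namely $\lambda$ small and $t$ large) that $a\le\tau\le t-a$, so that the full interval is admissible. Everything else is quantitative and routine.
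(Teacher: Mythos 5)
Your proposal is correct and follows essentially the same route as the paper's proof: Lipschitz continuity of the trajectory to produce the window $]\tau-\delta/(2M),\tau+\delta/(2M)[$, Assumption \ref{f: stab} to place the trajectory in $K^{c}_{\gamma(\delta/2)}$ there, and the estimate $\int_{\mathcal{I}}e^{-\lambda s}\,\dd s = \tfrac{2}{\lambda}e^{-\lambda\tau}\sinh\bigl(\tfrac{\lambda\delta}{2M}\bigr)\ge \tfrac{\delta}{M}e^{-\lambda\tau}$. The endpoint issue you flag is real and is handled in the paper exactly as you suggest: it assumes, without loss of generality (taking $M$ large enough and $t$ large), that $0<\tau-\delta/(2M)$ and $\tau+\delta/(2M)<t$, so the full symmetric window lies in $[0,t]$.
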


\begin{proof}
Let $\delta>0$ be fixed. Suppose $\exists\,\tau \in \;]0,t[$ such that $\operatorname{dist}\left(y_{x}^{\alpha_{\varepsilon}}(\tau), \mathfrak{M}\right) > \delta$. Using the triangular inequality, one gets
\begin{equation*}
    \delta 
     < \operatorname{dist}\left(y_{x}^{\alpha_{\varepsilon}}(\tau), \mathfrak{M}\right) 
    \leq \operatorname{dist}\left(y_{x}^{\alpha_{\varepsilon}}(s), \mathfrak{M}\right) + M|s-\tau| \quad \quad \forall\, s\in [0,t]
\end{equation*}
where in the second inequality we used Lipschitz continuity of the trajectories, consequence of $|\dot{y}_{x}^{\alpha_{\varepsilon}}(s)| = |\alpha_{\varepsilon}(x)| \leq M$ and Remark \ref{rem: bounded controls}. In particular one gets
\begin{equation*}
    \delta - M|s-\tau| \leq \operatorname{dist}\left(y_{x}^{\alpha_{\varepsilon}}(s), \mathfrak{M}\right).
\end{equation*}
We wish to have $\frac{\delta}{2} < \delta - M|s-\tau|$. Thus we restrict $s$ to the interval 
\begin{equation*}
     \mathcal{I}:= \,]\tau\,-\,\delta/(2M) , \;\tau\,+\,\delta/(2M)[ \;\cap \; [0,t]
\end{equation*}
and obtain 
\begin{equation*}
    \operatorname{dist}\left(y_{x}^{\alpha_{\varepsilon}}(s), \mathfrak{M}\right) > \frac{\delta}{2} \quad \quad \forall\, s\in \mathcal{I}.
\end{equation*}

Note that, provided we choose $M$ in Remark \ref{rem: bounded controls} large enough, and if we let the time horizon $t$ be large, one can assume without loss of generality that $0\,<\, \tau -\delta/(2M)$ and $\tau +\delta/(2M)\,<\,t$ so that $\;\mathcal{I} =\; ]\tau\,-\,\delta/(2M) , \;\tau\,+\,\delta/(2M)[$.

Thus one obtains
\begin{equation*}
    f(y_{x}^{\alpha_{\varepsilon}}(s)) \geq \inf\limits_{z\in \mathbb{R}^{n}} \left\{f(z) \,:\, \operatorname{dist}\left(z, \mathfrak{M}\right) > \frac{\delta}{2} \right\}\quad \quad \forall\, s\in \mathcal{I}.
\end{equation*}
Subtracting $\underline{f}$ from both sides of the inequality, and recalling the function $\gamma(\cdot)$ in Assumption \ref{f: stab}, one gets
\begin{equation*}
    f(y_{x}^{\alpha_{\varepsilon}}(s)) - \underline{f} \; > \; \gamma(\delta/2)\quad \quad \forall\, s\in \mathcal{I}.
\end{equation*}
Recalling from \eqref{eq: set K_delta} the definition of the set $K^{c}_{\gamma(\delta/2)} = \{ z\in \mathbb{R}^{n} \,:\, f(z) - \underline{f} > \gamma(\delta/2)\}$, the latter inequality shows that
\begin{equation*}
    y_{x}^{\alpha_{\varepsilon}}(s) \in K^{c}_{\gamma(\delta/2)}\quad \quad \forall\, s\in \mathcal{I}.
\end{equation*}
We can now estimate the following (see the notations and definition in \S \ref{sec: occ})
\begin{equation}\label{eq: lower bound on mu in proof}
\begin{aligned}
     \mu_{t}^{\lambda}[x,\alpha_{\varepsilon}]\big(\gamma(\delta/2)\big)
     & =  \frac{\lambda}{1-e^{-\lambda t}} \int_{0}^{t} \mathds{1}_{K^{c}_{\gamma(\delta/2)}} \big(y_{x}^{\alpha_{\varepsilon}}(s)\big)\,e^{-\lambda s}\,\dd s\\
     & \geq \frac{\lambda}{1-e^{-\lambda t}} \int_{\mathcal{I}} e^{-\lambda s}\,\dd s = \frac{2}{1-e^{-\lambda t}} e^{-\lambda \tau} \sinh\left(\frac{\lambda\delta}{2M}\right)
\end{aligned}
\end{equation}
where $\sinh(\cdot)$ is the hyperbolic sinus function. The conclusion then follows noting that $\sinh(z)\geq z$ when $z>0$.  
\end{proof}

\begin{theorem}\label{thm: Lyapunov lambda t}
Let Assumptions \textbf{(A)} be satisfied. 
Let $y_{x}^{\alpha_{\varepsilon}}(\cdot)$ be an $\varepsilon$-optimal trajectory for problem (\ref{OCP time lambda M}). 
Then when $\varepsilon$ is small, the set of global minimizers $\mathfrak{M}$ is Lyapunov stable, i.e. 
\begin{equation*}
    \forall \; \delta>0,\; \exists \;\eta>0 \text{ such that } \operatorname{dist}(x, \mathfrak{M}) \leq \eta \;\Rightarrow\; \operatorname{dist}\left(y_{x}^{\alpha_{\varepsilon}}(s), \mathfrak{M}\right) \leq \delta \quad \forall s\in \; [0,t].
\end{equation*}
This is in particular satisfied by an optimal trajectory.
\end{theorem}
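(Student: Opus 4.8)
The plan is to argue by contradiction, pairing the occupation-measure lower bound of Lemma \ref{lem: lower bound mu 1} against the upper bound of Theorem \ref{thm: stab lambda t}, both evaluated at the \emph{same} energy threshold $\gamma(\delta/2)$ furnished by Assumption \ref{f: stab}. The reason this pairing is the right one is that the two estimates share the identical discount prefactor $\lambda/(1-e^{-\lambda t})$, so chaining them causes that factor to cancel and leaves a clean, parameter-robust inequality.

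First I would fix $\delta>0$ and suppose that the stability conclusion fails, i.e.\ that the trajectory escapes the $\delta$-tube: $\operatorname{dist}(y_x^{\alpha_\varepsilon}(s),\mathfrak{M})>\delta$ for some $s\in[0,t]$. Since $s\mapsto \operatorname{dist}(y_x^{\alpha_\varepsilon}(s),\mathfrak{M})$ is continuous (the trajectory is Lipschitz with constant $M$) and equals $\operatorname{dist}(x,\mathfrak{M})\le\eta<\delta$ at $s=0$, a short continuity argument produces an interior escape time $\tau\in\;]0,t[$ with $\operatorname{dist}(y_x^{\alpha_\varepsilon}(\tau),\mathfrak{M})>\delta$ (if the first escape occurs at the endpoint $s=t$, openness of the strict inequality lets me retreat to a slightly earlier time still exceeding $\delta$).

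Next I would invoke Lemma \ref{lem: lower bound mu 1} at this $\tau$ and, simultaneously, apply Theorem \ref{thm: stab lambda t} with its free parameter set to $\gamma(\delta/2)>0$, obtaining
\[
\lambda\,\frac{e^{-\lambda\tau}}{1-e^{-\lambda t}}\,\frac{\delta}{M}\;\le\;\mu_{t}^{\lambda}[x,\alpha_{\varepsilon}]\big(\gamma(\delta/2)\big)\;\le\;\frac{\lambda}{1-e^{-\lambda t}}\cdot\frac{R\,\operatorname{dist}(x,\mathfrak{M})+\varepsilon}{\gamma(\delta/2)}.
\]
Cancelling the common factor $\lambda/(1-e^{-\lambda t})$ and using $\tau<t$ to bound $e^{-\lambda\tau}\ge e^{-\lambda t}$ yields
\[
R\,\operatorname{dist}(x,\mathfrak{M})+\varepsilon\;\ge\; e^{-\lambda t}\,\frac{\delta\,\gamma(\delta/2)}{M}\;=:\;C(\delta),
\]
where $C(\delta)>0$ is a fixed constant once $\delta,\lambda,t,M$ are fixed. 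To close the contradiction I would require $\varepsilon<C(\delta)$ (this is precisely the ``$\varepsilon$ small'' hypothesis, automatic for the optimal trajectory $\varepsilon=0$) and choose $\eta$ smaller than both $\delta$ and $(C(\delta)-\varepsilon)/(2R)$. Then $\operatorname{dist}(x,\mathfrak{M})\le\eta$ forces $R\,\operatorname{dist}(x,\mathfrak{M})+\varepsilon<C(\delta)$, contradicting the displayed bound; hence the trajectory never leaves the $\delta$-neighborhood, which is Lyapunov stability.

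The step needing the most care is the matching of the two estimates at the threshold $\gamma(\delta/2)$ rather than at $\delta$ itself: Assumption \ref{f: stab} is what converts a \emph{spatial} escape of size $\delta$ (quantified in the lemma through the speed bound $M$) into an \emph{energy} excess $\gamma(\delta/2)$ that the upper bound of Theorem \ref{thm: stab lambda t} can control. The rest is bookkeeping — the clean cancellation of the $\lambda/(1-e^{-\lambda t})$ prefactors makes the argument uniform in $\lambda,t$, and the only genuinely delicate points are the continuity argument guaranteeing an interior escape time $\tau\in\;]0,t[$ and verifying that $C(\delta)$ remains strictly positive.
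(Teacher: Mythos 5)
Your proposal is correct and follows essentially the same route as the paper's proof: a contradiction argument chaining the lower bound of Lemma \ref{lem: lower bound mu 1} against the upper bound of Theorem \ref{thm: stab lambda t}, both evaluated at the threshold $\gamma(\delta/2)$, with the $\lambda/(1-e^{-\lambda t})$ prefactors cancelling. Your version is in fact slightly more careful on two points the paper glosses over --- the continuity argument producing an interior escape time $\tau\in\;]0,t[$, and the replacement of $e^{-\lambda\tau}$ by the uniform bound $e^{-\lambda t}$ so that the contradiction constant $C(\delta)$ does not depend on $\tau$.
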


\begin{proof}
We proceed by contradiction. Let $\delta>0$ be fixed, and suppose that for all $\eta>0$ there exists $\tau\in \;]0,t[$ such that $\operatorname{dist}\left(y_{x}^{\alpha_{\varepsilon}}(\tau), \mathfrak{M}\right) > \delta$ and $\operatorname{dist}(x, \mathfrak{M}) \leq \eta$. Then Lemma \ref{lem: lower bound mu 1} ensures that 
\begin{equation*}
    \lambda \frac{e^{-\lambda \tau}}{1-e^{-\lambda t}}\frac{\delta}{M} \leq \mu_{t}^{\lambda}[x,\alpha_{\varepsilon}]\big(\gamma(\delta/2)\big).
\end{equation*}
On the other hand, Theorem \ref{thm: stab lambda t} guarantees 
\begin{equation*}
    \mu_{t}^{\lambda}[x,\alpha_{\varepsilon}](\gamma(\delta/2)) \leq \frac{\lambda}{1-e^{-\lambda t}}\left(\frac{R \operatorname{dist}(x,\mathfrak{M}) + \varepsilon}{\gamma(\delta/2)}\right).
\end{equation*}
Therefore, and since $\operatorname{dist}(x, \mathfrak{M}) \leq \eta$, one gets
\begin{equation*}
     e^{-\lambda \tau}\frac{\delta}{M} 
     \leq \frac{\eta\,R  + \varepsilon}{\gamma(\delta/2)} \quad \quad \text{ that is } \quad \gamma(\delta/2)\frac{\delta}{M} 
     \leq (\eta\,R  + \varepsilon) e^{\lambda \tau}.
\end{equation*}
However, choosing $\eta$ and $\varepsilon$ small leads to a contradiction, and thus concludes the proof.
\end{proof}

\subsection{The infinite-horizon discounted problem}

Recall the optimal control problem \eqref{OCP lambda}.

\begin{lemma}\label{lem: lower bound mu 2}
Let Assumptions \textbf{(A)} be satisfied. 
Let $y_{x}^{\alpha_{\varepsilon}}(\cdot)$ be an $\varepsilon$-optimal trajectory for the control problem \eqref{OCP lambda}, and $\delta>0$ fixed. If  $\,\exists\,\tau \in \,]0,\infty[$ such that $\operatorname{dist}\left(y_{x}^{\alpha_{\varepsilon}}(\tau), \mathfrak{M}\right) > \delta$, then 
\begin{equation*}
    \mu^{\lambda}[x,\alpha_{\varepsilon}]\big(\gamma(\delta/2)\big) \geq \lambda \frac{\delta}{M} e^{-\lambda \tau} \quad \quad \forall\, \lambda>0
\end{equation*}
where $\gamma(\cdot)$ is the one in Assumption \ref{f: stab}.
\end{lemma}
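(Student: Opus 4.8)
The plan is to reproduce the argument of Lemma \ref{lem: lower bound mu 1} \emph{mutatis mutandis}, replacing the finite-horizon occupation measure by the infinite-horizon one $\Gamma^{\lambda}[x,\alpha_{\varepsilon}](\cdot)$ of \textit{Case 2} in Definition \ref{def: occ}, which carries the weight $\lambda e^{-\lambda s}$ over $[0,\infty)$ instead of $\frac{\lambda}{1-e^{-\lambda t}}e^{-\lambda s}$ over $[0,t]$. Alternatively, one may pass to the limit $t\to\infty$ in Lemma \ref{lem: lower bound mu 1}, using the total variation convergence of Lemma \ref{lem: conv measures} together with $1-e^{-\lambda t}\to 1$; I will follow the direct route as it is self-contained.

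Starting from the hypothesis that $\operatorname{dist}(y_{x}^{\alpha_{\varepsilon}}(\tau),\mathfrak{M})>\delta$ for some $\tau\in\,]0,\infty[$, I would first invoke the Lipschitz continuity of the trajectory, which follows from $|\dot{y}_{x}^{\alpha_{\varepsilon}}(s)|=|\alpha_{\varepsilon}(s)|\leq M$ (Remark \ref{rem: bounded controls}). The triangle inequality then gives $\delta<\operatorname{dist}(y_{x}^{\alpha_{\varepsilon}}(s),\mathfrak{M})+M|s-\tau|$, so that $\operatorname{dist}(y_{x}^{\alpha_{\varepsilon}}(s),\mathfrak{M})>\delta/2$ on the interval $\mathcal{I}:=\,]\tau-\delta/(2M),\,\tau+\delta/(2M)[$. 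Since $M\geq 1+\sqrt{6\|f\|_{\infty}}$ may be taken arbitrarily large, I assume without loss of generality that $\tau-\delta/(2M)>0$, so $\mathcal{I}\subset\,]0,\infty[$; unlike the finite-horizon case, the upper endpoint imposes no constraint here because the horizon is infinite.

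Next I would apply Assumption \ref{f: stab} to turn the distance bound into an energy bound: on $\mathcal{I}$ one has $f(y_{x}^{\alpha_{\varepsilon}}(s))-\underline{f}>\gamma(\delta/2)$, i.e.\ $y_{x}^{\alpha_{\varepsilon}}(s)\in K^{c}_{\gamma(\delta/2)}$. Restricting the integral defining the occupation measure to $\mathcal{I}$ yields
\begin{equation*}
    \mu^{\lambda}[x,\alpha_{\varepsilon}]\big(\gamma(\delta/2)\big)=\lambda\int_{0}^{\infty}\mathds{1}_{K^{c}_{\gamma(\delta/2)}}\big(y_{x}^{\alpha_{\varepsilon}}(s)\big)\,e^{-\lambda s}\,\dd s\;\geq\;\lambda\int_{\mathcal{I}}e^{-\lambda s}\,\dd s=2\,e^{-\lambda\tau}\sinh\!\left(\frac{\lambda\delta}{2M}\right),
\end{equation*}
and the elementary inequality $\sinh(z)\geq z$ for $z>0$ delivers the claimed bound $\mu^{\lambda}[x,\alpha_{\varepsilon}](\gamma(\delta/2))\geq\lambda\frac{\delta}{M}e^{-\lambda\tau}$. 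The computation is routine; the only point needing attention is ensuring that $\mathcal{I}$ lies inside $\,]0,\infty[$, which is automatic on the upper side and arranged on the lower side by enlarging $M$, exactly as in Lemma \ref{lem: lower bound mu 1}. The disappearance of the normalizing factor $\frac{1}{1-e^{-\lambda t}}$ is precisely what makes this bound cleaner than its finite-horizon counterpart.
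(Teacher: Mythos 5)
Your proposal is correct and matches the paper's own proof, which likewise repeats the argument of Lemma \ref{lem: lower bound mu 1} with the lower bound recomputed as $\lambda\int_{\mathcal{I}}e^{-\lambda s}\,\dd s = 2e^{-\lambda\tau}\sinh\left(\frac{\lambda\delta}{2M}\right)\geq \lambda\frac{\delta}{M}e^{-\lambda\tau}$; the paper even notes, as you do, the alternative route of sending $t\to\infty$ via Lemma \ref{lem: conv measures}. Your additional remark that the upper endpoint of $\mathcal{I}$ needs no constraint on an infinite horizon is a fair observation but does not change the substance.
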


\begin{proof}
The proof is the same as for Lemma \ref{lem: lower bound mu 1}, the only difference being the lower bound in \eqref{eq: lower bound on mu in proof} which becomes
\begin{equation*}
\begin{aligned}
     \mu^{\lambda}[x,\alpha_{\varepsilon}]\big(\gamma(\delta/2)\big)
     & =  \lambda \int_{0}^{\infty} \mathds{1}_{K^{c}_{\gamma(\delta/2)}} \big(y_{x}^{\alpha_{\varepsilon}}(s)\big)\,e^{-\lambda s}\,\dd s\\
     & \geq \lambda \int_{\mathcal{I}} e^{-\lambda s}\,\dd s = 2 e^{-\lambda \tau} \sinh\left(\frac{\lambda\delta}{2M}\right)\geq \lambda \frac{\delta}{M}e^{-\lambda \tau}
\end{aligned}
\end{equation*}
where we recall $\;\mathcal{I} =\; ]\tau\,-\,\delta/(2M) , \;\tau\,+\,\delta/(2M)[$.
\end{proof}

An alternative proof could be to invoke the convergence in Lemma \ref{lem: conv measures}, see also \eqref{eq: conv mu}, and send $t\to\infty$ in the result of Lemma \ref{lem: lower bound mu 1}.

\begin{theorem}\label{thm: Lyapunov lambda}
Let Assumptions \textbf{(A)} be satisfied. 
Let $y_{x}^{\alpha_{\varepsilon}}(\cdot)$ be an $\varepsilon$-optimal trajectory for problem (\ref{OCP lambda}). Then when $\varepsilon$ is small, the set of global minimizers $\mathfrak{M}$ is Lyapunov stable, i.e.
\begin{equation*}
    \forall \; \delta>0,\; \exists \;\eta>0 \text{ such that } \operatorname{dist}(x, \mathfrak{M}) \leq \eta \;\Rightarrow\; \operatorname{dist}\left(y_{x}^{\alpha_{\varepsilon}}(s), \mathfrak{M}\right) \leq \delta \quad \forall s\in \; ]0,\infty[.
\end{equation*}
This is in particular satisfied by an optimal trajectory.
\end{theorem}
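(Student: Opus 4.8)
The plan is to reproduce, \emph{mutatis mutandis}, the contradiction argument used for Theorem~\ref{thm: Lyapunov lambda t}, swapping the finite-horizon occupation-measure estimates for their infinite-horizon counterparts. Fix $\delta>0$ and suppose, for contradiction, that no admissible radius $\eta$ works: for every $\eta>0$ there is a starting point $x$ with $\operatorname{dist}(x,\mathfrak{M})\le\eta$ and a time $\tau\in\,]0,\infty[$ such that $\operatorname{dist}\left(y_{x}^{\alpha_{\varepsilon}}(\tau),\mathfrak{M}\right)>\delta$.

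First I would feed this exit time into the lower bound of Lemma~\ref{lem: lower bound mu 2}, which gives
\begin{equation*}
    \mu^{\lambda}[x,\alpha_{\varepsilon}]\big(\gamma(\delta/2)\big)\;\ge\;\lambda\,\frac{\delta}{M}\,e^{-\lambda\tau}.
\end{equation*}
On the other hand, Theorem~\ref{thm: stab lambda} applied with sub-level parameter $\gamma(\delta/2)$ together with $\operatorname{dist}(x,\mathfrak{M})\le\eta$ yields the matching upper bound
\begin{equation*}
    \mu^{\lambda}[x,\alpha_{\varepsilon}]\big(\gamma(\delta/2)\big)\;\le\;\lambda\,\frac{R\,\eta+\varepsilon}{\gamma(\delta/2)}.
\end{equation*}
Cancelling the common factor $\lambda$ and rearranging produces the key inequality
\begin{equation*}
    \gamma(\delta/2)\,\frac{\delta}{M}\,e^{-\lambda\tau}\;\le\;R\,\eta+\varepsilon ,
\end{equation*}
whose left-hand side is a fixed positive quantity \emph{once $\tau$ is controlled}.

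The step I expect to be the main obstacle is precisely this control of the exit time $\tau$, and it is here that the infinite-horizon case departs genuinely from the finite-horizon one. In the proof of Theorem~\ref{thm: Lyapunov lambda t} one had $\tau\in\,]0,t[$, so $e^{-\lambda\tau}\ge e^{-\lambda t}>0$ was bounded below by a constant depending only on the \emph{fixed} horizon, and letting $\eta,\varepsilon\to0$ forced the positive left-hand side to vanish---a contradiction. When $t=\infty$ the time $\tau$ ranges over all of $]0,\infty[$ and $e^{-\lambda\tau}$ may be arbitrarily small, so the inequality above is no longer contradictory for \emph{late} excursions: a trajectory that remains near $\mathfrak{M}$ and only makes a large excursion at some very large $\tau$ pays a discounted cost of order $e^{-\lambda\tau}$, which is compatible with $\varepsilon$-optimality for any fixed $\varepsilon>0$. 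To close the argument I would therefore take $\tau$ to be the \emph{first} exit time $\tau^{\ast}:=\inf\{s>0:\operatorname{dist}(y_{x}^{\alpha_{\varepsilon}}(s),\mathfrak{M})>\delta\}$ and seek an a priori bound $\tau^{\ast}\le T(\delta,\lambda)$ independent of $\eta$ (for instance by exploiting near-optimality to force the trajectory to descend toward $\mathfrak{M}$ early, before any escape can occur); equivalently, one may first establish the stability conclusion on a bounded window $s\in\,]0,T]$, where the finite-horizon estimate applies verbatim, and only afterwards combine it with the reachability results of Section~\ref{sec: reach} to obtain the practical asymptotic statement of the main theorem. Absent such a bound on $\tau$, the direct transcription of the finite-horizon proof leaves a gap exactly in the late-excursion regime, and handling that regime is the crux of the infinite-horizon case.
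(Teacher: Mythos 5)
Up to the key inequality
\begin{equation*}
    \gamma(\delta/2)\,\frac{\delta}{M}\,e^{-\lambda\tau}\;\le\;R\,\eta+\varepsilon ,
\end{equation*}
your argument coincides step by step with the paper's own proof: same contradiction hypothesis, same lower bound from Lemma~\ref{lem: lower bound mu 2}, same upper bound from Theorem~\ref{thm: stab lambda}. The divergence is only at the end: the paper rewrites this as $\gamma(\delta/2)\frac{\delta}{M}\le(\eta R+\varepsilon)e^{\lambda\tau}$ and asserts that ``choosing $\eta$ and $\varepsilon$ small leads to a contradiction''---which is exactly the step you decline to take. Your objection is correct: in the contradiction hypothesis the exit time $\tau=\tau(\eta)$ is not at our disposal, and on an infinite horizon $e^{\lambda\tau(\eta)}$ may grow without bound as $\eta\to0$, so the inequality is satisfiable for every $\eta,\varepsilon>0$ and no contradiction follows. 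In other words, the gap you flag is present in the paper's own proof; you have not missed a hidden ingredient.

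Moreover, your late-excursion heuristic upgrades to a genuine counterexample for any fixed $\varepsilon>0$. Take $x\in\mathfrak{M}$ (so $\operatorname{dist}(x,\mathfrak{M})=0\le\eta$ for every $\eta$), keep $\alpha\equiv0$ on $[0,T]$, steer in unit time to a point at distance $2\delta$ from $\mathfrak{M}$ (possible whenever $\mathfrak{M}\neq\mathbb{R}^{n}$ and $\delta$ is small), return in unit time, then switch the control off forever. Since $u_{\lambda}(x)=\underline{f}/\lambda$, the excess cost is at most $\bigl(4\delta^{2}+4\|f\|_{\infty}\bigr)e^{-\lambda T}$, which is below $\varepsilon$ once $T\ge\lambda^{-1}\log\bigl((4\delta^{2}+4\|f\|_{\infty})/\varepsilon\bigr)$; yet this $\varepsilon$-optimal trajectory leaves the $\delta$-neighborhood of $\mathfrak{M}$ at time $T+1$. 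Consequently your first proposed fix (an a priori bound $\tau^{*}\le T(\delta,\lambda)$ independent of $\eta$) cannot exist for fixed $\varepsilon>0$, and your second fix is the best one can do: restricting an $\varepsilon$-optimal infinite-horizon trajectory to $[0,T]$ makes it $\varepsilon'$-optimal for the $T$-horizon problem with $\varepsilon'=\varepsilon+\mathcal{O}(e^{-\lambda T}/\lambda)$, and the finite-horizon argument then gives stability only on windows of length $T\lesssim\lambda^{-1}\log(1/\varepsilon)$, precisely the threshold at which the counterexample activates. The Case-2 assertion of Theorem~\ref{thm: main conclusion} inherits the same limitation as stated.

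What does survive is the case $\varepsilon=0$, but it needs an ingredient absent from both your proposal and the paper's proof: by the dynamic programming principle every tail of an optimal trajectory is again optimal, and (formally, using the HJB equation \eqref{eq: HJB lambda} and the feedback $\alpha^{*}=-Du_{\lambda}$) one has $\frac{\dd}{\dd s}u_{\lambda}\bigl(y_{x}^{\alpha_{*}}(s)\bigr)=-\bigl|Du_{\lambda}\bigl(y_{x}^{\alpha_{*}}(s)\bigr)\bigr|^{2}\le0$, so $w(s):=u_{\lambda}(y_{x}^{\alpha_{*}}(s))-\underline{f}/\lambda$ is non-increasing along optimal trajectories. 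Since $w\le R\operatorname{dist}(\cdot,\mathfrak{M})$ by Lemma~\ref{lem: bounds}, while $w\ge\gamma(\delta/2)\bigl(1-e^{-\lambda\delta/(2M)}\bigr)/\lambda$ at any point lying at distance greater than $\delta$ from $\mathfrak{M}$ (by the same excursion estimate as in Lemma~\ref{lem: lower bound mu 2}), $w$ is a true Lyapunov function and stability for optimal trajectories follows. So the theorem should either be restricted to $\varepsilon=0$ with a DPP-based proof, or restated with a time window growing like $\lambda^{-1}\log(1/\varepsilon)$; your diagnosis of where and why the transcription from the finite-horizon case breaks down is exactly right.
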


\begin{proof}
We proceed by contradiction. Let $\delta>0$ be fixed, and suppose that for all $\eta>0$ there exists $s\in \;]0,t[$ such that $\operatorname{dist}\left(y_{x}^{\alpha_{\varepsilon}}(s), \mathfrak{M}\right) > \delta$ and $\operatorname{dist}(x, \mathfrak{M}) \leq \eta$. Then Lemma \ref{lem: lower bound mu 2} ensures that 
\begin{equation*}
    \lambda \frac{\delta}{M} e^{-\lambda \tau} \leq \mu^{\lambda}[x,\alpha_{\varepsilon}]\big(\gamma(\delta/2)\big).
\end{equation*}
On the other hand, Theorem \ref{thm: stab lambda} guarantees 
\begin{equation*}
    \mu_{t}^{\lambda}[x,\alpha_{\varepsilon}](\gamma(\delta/2)) \leq \lambda\left(\frac{R \operatorname{dist}(x,\mathfrak{M}) + \varepsilon}{\gamma(\delta/2)}\right).
\end{equation*}
Therefore, and since $\operatorname{dist}(x, \mathfrak{M}) \leq \eta$, one gets
\begin{equation*}
     \frac{\delta}{M} e^{-\lambda \tau}
     \leq \frac{\eta\,R  + \varepsilon}{\gamma(\delta/2)} \quad \quad \text{ that is } \quad \gamma(\delta/2)\frac{\delta}{M} 
     \leq (\eta\,R  + \varepsilon) e^{\lambda \tau}.
\end{equation*}
However, choosing $\eta$ and $\varepsilon$ small leads to a contradiction, and thus concludes the proof.
\end{proof}

\subsection{The finite-horizon un-discounted problem}

Recall the optimal control problem \eqref{OCP time}.

\begin{lemma}\label{lem: lower bound mu 3}
Let Assumptions \textbf{(A)} be satisfied. 
Let $y_{x}^{\alpha_{\varepsilon}}(\cdot)$ be an $\varepsilon$-optimal trajectory for the control problem \eqref{OCP time}, and $\delta>0$ fixed. If  $\,\exists\,\tau \in \,]0,t[$ such that $\operatorname{dist}\left(y_{x}^{\alpha_{\varepsilon}}(\tau), \mathfrak{M}\right) > \delta$, then 
\begin{equation*}
    \mu_{t}[x,\alpha_{\varepsilon}]\big(\gamma(\delta/2)\big) \geq \frac{1}{t}\, \frac{\delta}{M} \quad \quad \forall\,  t\in \;]0,\infty[
\end{equation*}
where $\gamma(\cdot)$ is the one in Assumption \ref{f: stab}.
\end{lemma}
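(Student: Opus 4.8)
The plan is to follow the template of Lemma~\ref{lem: lower bound mu 1} verbatim through its kinematic part, since that part never uses the discount factor nor $\varepsilon$-optimality---it relies only on the speed bound and the hypothesis at $\tau$. Assuming $\operatorname{dist}(y_{x}^{\alpha_{\varepsilon}}(\tau),\mathfrak{M})>\delta$, I would first exploit the uniform bound $|\dot{y}_{x}^{\alpha_{\varepsilon}}(\cdot)|\leq M$ (Remark~\ref{rem: bounded controls}) to get, for every $s$,
\[
\delta<\operatorname{dist}(y_{x}^{\alpha_{\varepsilon}}(\tau),\mathfrak{M})\leq \operatorname{dist}(y_{x}^{\alpha_{\varepsilon}}(s),\mathfrak{M})+M|s-\tau|,
\]
so that on the interval $\mathcal{I}:=\,]\tau-\delta/(2M),\tau+\delta/(2M)[\,\cap\,[0,t]$ one has $\operatorname{dist}(y_{x}^{\alpha_{\varepsilon}}(s),\mathfrak{M})>\delta/2$. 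Subtracting $\underline{f}$ and invoking Assumption~\ref{f: stab} then yields $f(y_{x}^{\alpha_{\varepsilon}}(s))-\underline{f}>\gamma(\delta/2)$, i.e. $y_{x}^{\alpha_{\varepsilon}}(s)\in K^{c}_{\gamma(\delta/2)}$ for all $s\in\mathcal{I}$.

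The only genuine difference from Lemma~\ref{lem: lower bound mu 1} lies in the final step, where I would now use the \emph{un-discounted} occupation measure of Case~3 of Definition~\ref{def: occ}. Since there is no exponential weight, the lower bound reads
\[
\mu_{t}[x,\alpha_{\varepsilon}]\big(\gamma(\delta/2)\big)=\frac{1}{t}\int_{0}^{t}\mathds{1}_{K^{c}_{\gamma(\delta/2)}}\big(y_{x}^{\alpha_{\varepsilon}}(s)\big)\,\dd s\geq \frac{1}{t}\int_{\mathcal{I}}\dd s=\frac{|\mathcal{I}|}{t}.
\]
As in the proof of Lemma~\ref{lem: lower bound mu 1}, taking $M$ large and $t$ large lets one assume without loss of generality that $\mathcal{I}$ is the full interior interval $]\tau-\delta/(2M),\tau+\delta/(2M)[$, so $|\mathcal{I}|=\delta/M$, giving the claimed bound $\mu_{t}[x,\alpha_{\varepsilon}](\gamma(\delta/2))\geq \tfrac{1}{t}\tfrac{\delta}{M}$. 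The constant here is cleaner than in the discounted case precisely because the $\sinh$ term and the factor $e^{-\lambda\tau}/(1-e^{-\lambda t})$ collapse in the absence of discounting.

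An alternative I would keep in mind is to bypass the direct computation entirely and pass to the limit $\lambda\to 0$ in Lemma~\ref{lem: lower bound mu 1}, using the convergence $\mu_{t}^{\lambda}[x,\alpha_{\varepsilon}](\cdot)\to\mu_{t}[x,\alpha_{\varepsilon}](\cdot)$ from Lemma~\ref{lem: conv measures} together with \eqref{eq: conv mu}; the prefactor $\lambda\,e^{-\lambda\tau}/(1-e^{-\lambda t})$ then tends to $1/t$, reproducing the same bound. The one point requiring a little care---the main (if minor) obstacle---is the boundary truncation of $\mathcal{I}$: if $\tau$ sits within $\delta/(2M)$ of $0$ or $t$, then $|\mathcal{I}|$ can be as small as $\delta/(2M)$, so the clean constant $\delta/M$ relies on the same ``$M$ and $t$ large'' convention already adopted in Lemma~\ref{lem: lower bound mu 1}; otherwise one would have to carry a truncated length through the estimate.
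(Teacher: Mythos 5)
Your proposal is correct and follows exactly the paper's route: reuse the kinematic argument of Lemma \ref{lem: lower bound mu 1} (speed bound, interval $\mathcal{I}$, Assumption \ref{f: stab}) and only replace the final discounted integral by the plain average $\frac{1}{t}\int_{\mathcal{I}}\dd s = \frac{1}{t}\frac{\delta}{M}$, under the same ``$M$ and $t$ large'' convention for the length of $\mathcal{I}$. Even your fallback suggestion of sending $\lambda\to 0$ in Lemma \ref{lem: lower bound mu 1} via Lemma \ref{lem: conv measures} is precisely the alternative the paper itself records after the proof.
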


\begin{proof}
The proof is the same as for Lemma \ref{lem: lower bound mu 1}, the only difference being the lower bound in \eqref{eq: lower bound on mu in proof} which becomes
\begin{equation*}
\begin{aligned}
     \mu_{t}[x,\alpha_{\varepsilon}]\big(\gamma(\delta/2)\big)
     & =  \frac{1}{t} \int_{0}^{t} \mathds{1}_{K^{c}_{\gamma(\delta/2)}} \big(y_{x}^{\alpha_{\varepsilon}}(s)\big)\,\dd s
     \geq \frac{1}{t}\int_{\mathcal{I}} 1\,\dd s = \frac{1}{t}\,\frac{\delta}{M}
\end{aligned}
\end{equation*}
where $\;\mathcal{I} =\; ]\tau\,-\,\delta/(2M) , \;\tau\,+\,\delta/(2M)[$.
\end{proof}

An alternative proof could be to invoke the convergence in Lemma \ref{lem: conv measures}, see also \eqref{eq: conv mu}, and send $\lambda\to 0$ in the result of Lemma \ref{lem: lower bound mu 1}.

\begin{theorem}\label{thm: Lyapunov t}
Let Assumptions \textbf{(A)} be satisfied. 
Let $y_{x}^{\alpha_{\varepsilon}}(\cdot)$ be an $\varepsilon$-optimal trajectory for problem (\ref{OCP time}). Then when $\varepsilon$ is small, the set of global minimizers $\mathfrak{M}$ is Lyapunov stable, i.e. 
\begin{equation*}
    \forall \; \delta>0,\; \exists \;\eta>0 \text{ such that } \operatorname{dist}(x, \mathfrak{M}) \leq \eta \;\Rightarrow\; \operatorname{dist}\left(y_{x}^{\alpha_{\varepsilon}}(s), \mathfrak{M}\right) \leq \delta \quad \forall s\in \; [0,t].
\end{equation*}
This is in particular satisfied by an optimal trajectory.
\end{theorem}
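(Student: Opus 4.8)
The plan is to argue by contradiction, reproducing \emph{mutatis mutandis} the scheme already used for Theorems \ref{thm: Lyapunov lambda t} and \ref{thm: Lyapunov lambda}: pair the lower bound on the occupation measure supplied by Lemma \ref{lem: lower bound mu 3} against the upper bound supplied by Theorem \ref{thm: stab t}, and read off an inequality that cannot persist as the initial distance and the optimality gap shrink. Concretely, fix $\delta > 0$ and suppose the stability property fails: for every $\eta > 0$ there are an initial point $x$ with $\operatorname{dist}(x, \mathfrak{M}) \leq \eta$ and an instant $\tau \in \;]0,t[$ at which $\operatorname{dist}\left(y_x^{\alpha_{\varepsilon}}(\tau), \mathfrak{M}\right) > \delta$.

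First I would invoke Lemma \ref{lem: lower bound mu 3} with this $\tau$, giving
\begin{equation*}
    \frac{1}{t}\,\frac{\delta}{M} \leq \mu_t[x,\alpha_{\varepsilon}]\big(\gamma(\delta/2)\big).
\end{equation*}
Next I would apply Theorem \ref{thm: stab t} at the sub-level value $\gamma(\delta/2)$ (legitimate since that estimate holds for every positive argument, and $\gamma(\delta/2) > 0$ by Assumption \ref{f: stab}), and use $\operatorname{dist}(x, \mathfrak{M}) \leq \eta$ to obtain
\begin{equation*}
    \mu_t[x,\alpha_{\varepsilon}]\big(\gamma(\delta/2)\big) \leq \frac{1}{t}\left(\frac{R\,\operatorname{dist}(x,\mathfrak{M}) + \varepsilon}{\gamma(\delta/2)}\right) \leq \frac{1}{t}\left(\frac{R\eta + \varepsilon}{\gamma(\delta/2)}\right).
\end{equation*}
Chaining the two displays and cancelling the common factor $1/t$ leaves the parameter-free relation $\gamma(\delta/2)\,\delta/M \leq R\eta + \varepsilon$.

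Here the conclusion is immediate: the left-hand side is a strictly positive constant determined by $\delta$, by the modulus $\gamma$ of Assumption \ref{f: stab}, and by the control bound $M$ of Remark \ref{rem: bounded controls}, while the right-hand side can be made arbitrarily small by taking $\eta \to 0$ and $\varepsilon$ small. This yields the contradiction and hence the stated Lyapunov stability for small $\varepsilon$; the optimal case is recovered by setting $\varepsilon = 0$.

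I expect no genuine obstacle in this case. Unlike the two discounted settings, no residual exponential factor $e^{\lambda \tau}$ survives the cancellation of the prefactor, so the contradiction is uniform in both $\tau$ and $t$ and requires no auxiliary bound on the blow-up instant. The only point meriting a line of justification is that the reachability estimate of Theorem \ref{thm: stab t} is being evaluated at the shifted level $\gamma(\delta/2)$ rather than at $\delta$ itself. As an alternative one could bypass the direct computation entirely and pass to the limit $\lambda \to 0$ in Theorem \ref{thm: Lyapunov lambda t}, using the convergence \eqref{eq: conv mu} of the occupation measures; the self-contained argument above is however shorter.
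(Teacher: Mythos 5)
Your proposal is correct and follows exactly the paper's own proof: the same contradiction argument pairing the lower bound of Lemma \ref{lem: lower bound mu 3} with the upper bound of Theorem \ref{thm: stab t} evaluated at the level $\gamma(\delta/2)$, cancelling the $1/t$ factors to reach the parameter-free inequality $\gamma(\delta/2)\,\delta/M \leq R\eta + \varepsilon$, which fails for $\eta$ and $\varepsilon$ small. Your closing observation that no exponential factor $e^{\lambda\tau}$ survives here, unlike in the two discounted cases, is also accurate and matches the paper's computation.
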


\begin{proof}
We proceed by contradiction. Let $\delta>0$ be fixed, and suppose that for all $\eta>0$ there exists $s\in \;]0,t[$ such that $\operatorname{dist}\left(y_{x}^{\alpha_{\varepsilon}}(s), \mathfrak{M}\right) > \delta$ and $\operatorname{dist}(x, \mathfrak{M}) \leq \eta$. Then Lemma \ref{lem: lower bound mu 3} ensures that 
\begin{equation*}
    \frac{1}{t}\, \frac{\delta}{M} \leq 
    \mu_{t}[x,\alpha_{\varepsilon}]\big(\gamma(\delta/2)\big).
\end{equation*}
On the other hand, Theorem \ref{thm: stab t} guarantees 
\begin{equation*}
    \mu_{t}[x,\alpha_{\varepsilon}]\big(\gamma(\delta/2)\big) \leq \frac{1}{t}\left(\frac{R \operatorname{dist}(x,\mathfrak{M}) + \varepsilon}{\gamma(\delta/2)}\right).
\end{equation*}
Therefore, and since $\operatorname{dist}(x, \mathfrak{M}) \leq \eta$, one gets
\begin{equation*}
     \frac{\delta}{M} 
     \leq \frac{\eta\,R  + \varepsilon}{\gamma(\delta/2)} \quad \quad \text{ that is } \quad \gamma(\delta/2)\frac{\delta}{M} 
     \leq (\eta\,R  + \varepsilon) .
\end{equation*}
However, choosing $\eta$ and $\varepsilon$ small leads to a contradiction, and thus concludes the proof.
\end{proof}

\section{Conclusion: The convergence}\label{sec: conclusion}

Our main objective is to design trajectories which solve the optimization problems
\begin{equation*}
    \min\limits_{z\in\mathbb{R}^{n}}\, f(z).
\end{equation*}
To this end, we have considered the optimal control problem
\begin{equation}\label{OCP conclusion}
\begin{aligned}
    u_{\lambda}(x,t) = \; 
    \inf\limits_{\alpha(\cdot)} \; & \int_{0}^{t}\; \left(\frac{1}{2}|\alpha(s)|^{2} + f(y_{x}^{\alpha}(s))\right)\, e^{-\lambda s}\;\text{d}s,\\
    & \text{such that }\; \dot{y}_{x}^{\alpha}(s) = \alpha(s),\quad y_{x}^{\alpha}(0)=x\in\mathbb{R}^n\\
    & \text{and the controls } \alpha(\cdot):[0,\infty) \to B_{M} \text{ are measurable. } 
\end{aligned}
\end{equation}
and studied dynamical properties of its quasi-optimal trajectories, regardless of the initial position $x$, in three cases:
\begin{enumerate}[label = \underline{\textit{Case \arabic*}}.]
    \item When $t<\infty$, and $\lambda>0$: this is a consequence of Corollary \ref{cor: stab lambda t} and Theorem \ref{thm: Lyapunov lambda t}, when $\varepsilon$ and $\lambda$ are small, and $t$ is large. 
    \item When $t=\infty$, and $\lambda>0$: this is a consequence of Corollary \ref{cor: stab lambda} and Theorem \ref{thm: Lyapunov lambda}, when $\varepsilon$ and $\lambda$ are small.
    \item When $t<\infty$, and $\lambda=0$:  combine Corollary \ref{cor: stab t} and Theorem \ref{thm: Lyapunov t}, when $\varepsilon$ is small, and $t$ is large.  
\end{enumerate}

The following theorem  summarizes the global practical asymptotic convergence towards the set of global minimizers, as a consequence of the previous results.

\begin{theorem}\label{thm: main conclusion}
Let Assumptions \ref{f: nice}, \ref{f: has min}, and \ref{f: stab} be satisfied. 
Let $y_{x}^{\alpha_{\varepsilon}}(\cdot)$ be an $\varepsilon$-optimal trajectory for problem (\ref{OCP time lambda M}) with $\varepsilon\geq 0$ small. 
Then for all $\eta>0$, there exist $\lambda>0$ small, $t>0$ large, and $\tau = \tau(\eta,\varepsilon, \lambda, t)>0$ such that 
\begin{equation*}
    \operatorname{dist}\left(y_{x}^{\alpha_{\varepsilon}}(s), \mathfrak{M}\right) \leq \eta \quad \quad \forall\, s\in [\tau, t].
\end{equation*}
In other words, $y_{x}^{\alpha_{\varepsilon}}(s) \in \left\{z\in \mathbb{R}^{n}: \operatorname{dist}(z,\mathfrak{M})\leq \eta\right\}$ for all $s\in [\tau, t]$. 

The same conclusion holds for the problem \eqref{OCP lambda}, i.e. when $\lambda>0$ and $t=\infty$ in \eqref{OCP conclusion}, and for the problem \eqref{OCP time}, i.e. when $\lambda = 0$ and $t<\infty$ in \eqref{OCP conclusion}.
\end{theorem}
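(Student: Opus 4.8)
The plan is to combine the reachability results of Section~\ref{sec: reach} with the Lyapunov stability results of Section~\ref{sec: stab}: reachability forces the trajectory into an arbitrarily small neighborhood of $\mathfrak{M}$ at some finite time $\tau$, and stability keeps it there for all later times up to the horizon. Concretely, I would treat Case~1 (problem~\eqref{OCP time lambda M}) first and fix $\eta>0$. The subtle point is that the stability Theorem~\ref{thm: Lyapunov lambda t} controls trajectories through the distance of their \emph{initial} datum to $\mathfrak{M}$, whereas here the original starting point $x$ need not be close to $\mathfrak{M}$; only the point $y_x^{\alpha_\varepsilon}(\tau)$ reached at time $\tau$ is. I would therefore bridge the two results through a dynamic-programming (restriction) argument.

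First I would establish the restriction principle: if $y_x^{\alpha_\varepsilon}(\cdot)$ is $\varepsilon$-optimal for \eqref{OCP time lambda M} on $[0,t]$, then its tail $\sigma\mapsto y_x^{\alpha_\varepsilon}(\tau+\sigma)$ is quasi-optimal for the \emph{same} type of problem started at $y_x^{\alpha_\varepsilon}(\tau)$ with horizon $t-\tau$. Splitting the cost at $\tau$ and using the Bellman inequality $u_\lambda(x,t)\le c_{\mathrm{head}}+e^{-\lambda\tau}u_\lambda(y_x^{\alpha_\varepsilon}(\tau),t-\tau)$ gives, after cancellation, that the rescaled tail cost exceeds $u_\lambda(y_x^{\alpha_\varepsilon}(\tau),t-\tau)$ by at most $\varepsilon e^{\lambda\tau}$; hence the tail is $(\varepsilon e^{\lambda\tau})$-optimal for the shifted problem.

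Next I would run the occupation-measure contradiction of Section~\ref{sec: stab} on this shifted trajectory. Assuming an escape at some $s^\ast=\tau+\sigma^\ast\in\,]\tau,t]$ with $\operatorname{dist}(y_x^{\alpha_\varepsilon}(s^\ast),\mathfrak{M})>\eta$, the lower bound of Lemma~\ref{lem: lower bound mu 1} and the upper bound of Theorem~\ref{thm: stab lambda t}, both applied to the shifted problem with datum $y_x^{\alpha_\varepsilon}(\tau)$, combine (the common factor $\lambda/(1-e^{-\lambda(t-\tau)})$ cancels) into
\begin{equation*}
    \frac{\eta\,\gamma(\eta/2)}{M}\;\le\;\big(R\,\operatorname{dist}(y_x^{\alpha_\varepsilon}(\tau),\mathfrak{M})+\varepsilon\, e^{\lambda\tau}\big)\,e^{\lambda\sigma^\ast}.
\end{equation*}
With the calibration $t=1/\lambda$ of \eqref{config: lambda t} one has $e^{\lambda\tau}e^{\lambda\sigma^\ast}=e^{\lambda s^\ast}\le e^{\lambda t}=e$ and $e^{\lambda\sigma^\ast}\le e$, so the right-hand side is at most $(R\,\eta'+\varepsilon)\,e$ once $\operatorname{dist}(y_x^{\alpha_\varepsilon}(\tau),\mathfrak{M})\le\eta'$. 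Crucially, these thresholds depend only on $\eta$ and the fixed constants $M,R,\gamma$, and not on $\lambda$ or $t$ --- which is what breaks the apparent circularity. I would then fix $\varepsilon$ and $\eta'$ small enough that $(R\eta'+\varepsilon)e<\eta\gamma(\eta/2)/M$, invoke the reachability Corollary~\ref{cor: stab lambda t} with tolerance $\eta'$ to produce $\lambda$ small, $t=1/\lambda$, and a reaching time $\tau$ with $\operatorname{dist}(y_x^{\alpha_\varepsilon}(\tau),\mathfrak{M})\le\eta'$, and conclude that no escape after $\tau$ is possible, i.e.\ $\operatorname{dist}(y_x^{\alpha_\varepsilon}(s),\mathfrak{M})\le\eta$ on $[\tau,t]$.

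For Case~3 ($\lambda=0$) the same scheme is cleaner: the restriction principle gives a genuinely $\varepsilon$-optimal tail (no $e^{\lambda\tau}$ factor), and Lemma~\ref{lem: lower bound mu 3} with Theorem~\ref{thm: stab t} yield $\eta\gamma(\eta/2)/M\le R\eta'+\varepsilon$ directly, with no exponential to control. Case~2 ($t=\infty$) follows by the same combination of Corollary~\ref{cor: stab lambda} and Theorem~\ref{thm: Lyapunov lambda}, and can alternatively be recovered from Case~1 by letting $t\to\infty$ through the total-variation convergence of occupation measures in Lemma~\ref{lem: conv measures} and \eqref{eq: conv mu}. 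The main obstacle is precisely the mismatch between ``closeness of the initial datum'' (what stability is phrased for) and ``closeness at the reaching time'' (what reachability delivers): resolving it requires the dynamic-programming restriction together with a choice of $(\lambda,t)$ that keeps the discount factor $e^{\lambda\tau}$ bounded, and the infinite-horizon Case~2 is the most delicate exactly because that factor is not a priori controlled as $\tau$ grows.
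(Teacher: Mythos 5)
Your proposal is correct, and at the architectural level it coincides with the paper's own proof: use Corollary \ref{cor: stab lambda t} to reach a small neighborhood of $\mathfrak{M}$ at some time $\tau$, then apply the stability mechanism to the shifted trajectory $y_{x}^{\alpha_{\varepsilon}}(\tau+\cdot)$ to stay within $\eta$ on $[\tau,t]$. What you add, however, repairs two points that the paper's proof leaves implicit, and both are substantive. First, the paper invokes Theorem \ref{thm: Lyapunov lambda t} for the shifted trajectory without checking that it is a quasi-optimal trajectory of the problem started at $y_{x}^{\alpha_{\varepsilon}}(\tau)$ with horizon $t-\tau$; your dynamic-programming restriction step supplies exactly this justification, including the correct degradation of the optimality parameter to $\varepsilon e^{\lambda\tau}$, which is not cosmetic. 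Second, the paper's quantifier order is circular as written: the stability threshold $\delta$ produced by Theorem \ref{thm: Lyapunov lambda t} depends on $(\lambda,t)$ (its contradiction argument needs $(\delta R+\varepsilon)e^{\lambda\tau}<\eta\,\gamma(\eta/2)/M$ with the escape time $\tau$ possibly of order $t$), while Corollary \ref{cor: stab lambda t} subsequently chooses $(\lambda,t)$ as functions of $\delta$. Your calibration $t=1/\lambda$, which caps $e^{\lambda\tau}e^{\lambda\sigma^{\ast}}\le e$, makes the admissible $\eta'$ and $\varepsilon$ depend only on $(\eta,R,M,\gamma)$, thereby breaking the circle, and it is exactly the configuration \eqref{config: lambda t} already used in the reachability proof, so no inconsistency arises. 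Your unrolled occupation-measure contradiction (Lemma \ref{lem: lower bound mu 1} against Theorem \ref{thm: stab lambda t}, both applied to the shifted problem) is therefore a complete argument where the paper's black-box citation of its stability theorem is not. Cases 2 and 3 are treated correctly as well; for Case 2, the residual delicacy you flag (no horizon to calibrate against $\lambda$) can be closed by noting that Theorem \ref{thm: stab lambda} itself bounds the reaching time: if the trajectory stays outside the $\eta'$-neighborhood up to time $\tau$, then $1-e^{-\lambda\tau}\le \lambda\bigl(R\operatorname{dist}(x,\mathfrak{M})+\varepsilon\bigr)/\gamma(\eta')$, so $\tau$ stays bounded uniformly as $\lambda\to0$ and $e^{\lambda\tau}\to1$, which restores the uniformity of the thresholds in that case too.
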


\begin{proof}
We make the proof for the problem \eqref{OCP conclusion} in \textit{Case 1}. In the remaining two cases, the arguments are parallel. 

Let $\eta>0$ be arbitrarily fixed. From Theorem \ref{thm: Lyapunov lambda t}, we know that $\exists\, \delta>0$ such that: if $\;\operatorname{dist}(x, \mathfrak{M}) \leq \delta\;$, then $\; \operatorname{dist}\left(y_{x}^{\alpha_{\varepsilon}}(s), \mathfrak{M}\right) \leq \eta \quad \forall s\in \; [0,t] \;$. This holds for any initial position $x = y_{x}^{\alpha_{\varepsilon}}(0)\in \mathbb{R}^{n}$. 

From Corollary \ref{cor: stab lambda t}, we know in particular that for the previously chosen $\delta>0$, there exist a large $t>0$, a small $\lambda>0$, and some $\tau \in [0,t]$ such that $\;\operatorname{dist}\left(y_{x}^{\alpha_{\varepsilon}}(\tau),\mathfrak{M}\right)\leq \delta.\;$ Therefore, consider the trajectory
\begin{equation*}
    \overline{y}_{x}^{\alpha_{\varepsilon}}(\cdot) := y_{x}^{\alpha_{\varepsilon}}(\tau + \cdot) \quad \quad \text{in } [0,t-\tau].
\end{equation*}
Its initial value satisfies $\;\operatorname{dist}(\overline{y}_{x}^{\alpha_{\varepsilon}}(0), \mathfrak{M}) \leq \delta\;$, and hence by virtue of Theorem \ref{thm: Lyapunov lambda t} one has $\; \operatorname{dist}\left(\overline{y}_{x}^{\alpha_{\varepsilon}}(s), \mathfrak{M}\right) \leq \eta \quad \forall s\in \; ]0,t[ \;$, which means  $\; \operatorname{dist}\left(y_{x}^{\alpha_{\varepsilon}}(s), \mathfrak{M}\right) \leq \eta \quad \forall s\in \; [\tau,t]. \;$
\end{proof}

\appendix

\section{Proof of Lemma \ref{lem: bounds}}\label{appendix}

We will make the proof for $u_{\lambda}(x,t)$ which we recall is the value function of \eqref{opt intro} and the viscosity solution of \eqref{eq: HJB lambda t}.  The arguments are parallel for the other value functions $u_{\lambda}(x)$ corresponding to $(t=\infty, \lambda>0)$ in \eqref{opt intro} and $u(x,t)$ corresponding to $(t<\infty, \lambda=0)$ in \eqref{opt intro}, and which are the viscosity solutions to \eqref{eq: HJB lambda} and \eqref{eq: HJB t} respectively.

\begin{proof}[Proof of (i)] From the boundedness of $f(\cdot)$ in assumption \ref{f: nice}, we have $-\|f\|_{\infty} \leq f(x)$ for all $x\in \mathbb{R}^{n}$. Hence for any $\alpha(\cdot)$, it holds 
\begin{equation}
\label{eq: lower bound of u in proof}
    -\|f\|_{\infty} \frac{1-e^{-\lambda t}}{\lambda} \leq  \int_{0}^{t}\; \left(\frac{1}{2}|\alpha(s)|^{2} + f(y_{x}^{\alpha}(s))\right)\, e^{-\lambda s}\;\text{d}s.
\end{equation}
Taking the infimum over $\alpha(\cdot)$ yields $-\|f\|_{\infty} \frac{1-e^{-\lambda t}}{\lambda} \leq u_{\lambda}(x,t)$. \\
To get the upper-bound, consider the admissible control $\alpha(\cdot)\equiv 0$. Thus we have 
\begin{equation}
    \label{eq: nice bound on u in proof}
    u_{\lambda}(x,t) \leq f(x) \;\frac{1-e^{-\lambda t}}{\lambda}
\end{equation}
which is upper-bounded by $\|f\|_{\infty} \frac{1-e^{-\lambda t}}{\lambda}$. This shows that
\begin{equation}\label{bound in proof of i}
    |u_{\lambda}(x,t)|\leq \|f\|_{\infty} \frac{1-e^{-\lambda t}}{\lambda}
\end{equation}
hence $|\lambda\,u_{\lambda}(x,t)| \leq \|f\|_{\infty}$.
\end{proof}

\begin{proof}[Proof of (ii)]
We will construct viscosity sub-/super-solutions to \eqref{eq: HJB lambda t}, and conclude with a comparison principle. \\
Consider the function $\overline{v}(x,t) := u_{\lambda}(x,t+h) + \frac{1-e^{-\lambda h}}{\lambda}\|f\|_{\infty}$ where $h>0$ is fixed. Since $u_{\lambda}(x,t)$ is a viscosity solution to \eqref{eq: HJB lambda t}, the function $\overline{v}(x,t)$ solves the equation
\begin{equation*}
\left\{\;
\begin{aligned}
    \partial_{t} \overline{v}(x,t) + \lambda\,\overline{v}(x,t) + \frac{1}{2}|D\overline{v}(x,t)|^{2} &= f(x) + (1-e^{-\lambda h})\|f\|_{\infty},  &&\quad(x,t) \in \mathbb{R}^{n}\times(0,+\infty)\\
    \overline{v}(x,0) &= u_{\lambda}(x,h) + \frac{1-e^{-\lambda h}}{\lambda}\|f\|_{\infty},  &&\quad x\in \mathbb{R}^{n}.
\end{aligned}
\right.
\end{equation*}
Since $f(x) + (1-e^{-\lambda h})\|f\|_{\infty}\geq f(x)$, and $u_{\lambda}(x,h) + \frac{1-e^{-\lambda h}}{\lambda}\|f\|_{\infty}\geq 0$ thanks to \eqref{bound in proof of i}, the function $\overline{v}(x,t)$ is a viscosity super-solution to \eqref{eq: HJB lambda t}, while $u_{\lambda}(x,t)$ is a viscosity solution and consequently also a viscosity sub-solution to \eqref{eq: HJB lambda t}. We can now conclude by invoking the comparison principle in \cite[Theorem 2.1]{da2006uniqueness}\footnote{Note that the result in \cite{da2006uniqueness} concerns Hamiltonians which do not depend on zero-order terms (mainly the term $\lambda u_{\lambda}(x,t)$ in our case). However, the PDE \eqref{eq: HJB lambda t} can be rewritten using the exponential (Kruzkov) transform $\omega_{\lambda}(x,t) := e^{\lambda t}u_{\lambda}(x,t)$. Then the PDE satisfied by $\omega_{\lambda}$ takes the form $\partial_{t}\omega_{\lambda} + \frac{e^{-\lambda t}}{2}|D\omega_{\lambda}|^{2} = e^{\lambda t}f(x)$. The corresponding Hamiltonian is  $\frac{e^{-\lambda t}}{2}|D\omega_{\lambda}|^{2} - e^{\lambda t}f(x) = H(x,t,D\omega_{\lambda}) = \sup\limits_{\alpha}\left\{-\alpha\cdot D\omega_{\lambda} - \frac{e^{\lambda t}}{2}|\alpha^{2}| - e^{\lambda t}f(x)\right\}$ which fits to the one studied in \cite{da2006uniqueness}.} and obtain $u_{\lambda}(x,t) \leq \overline{v}(x,t)$.  \\
Consider now the function $\underline{v}(x,t) := u_{\lambda}(x,t+h) - \frac{1-e^{-\lambda h}}{\lambda}\|f\|_{\infty}$. As we did for $\overline{v}(x,t)$, we obtain 
\begin{equation*}
\left\{\;
\begin{aligned}
    \partial_{t} \underline{v}(x,t) + \lambda\,\underline{v}(x,t) + \frac{1}{2}|D\underline{v}(x,t)|^{2} &\leq f(x),  &&\quad(x,t) \in \mathbb{R}^{n}\times(0,+\infty)\\
    \overline{v}(x,0) &\leq 0,  &&\quad x\in \mathbb{R}^{n}
\end{aligned}
\right.
\end{equation*}
that is $\underline{v}(x,t)$ is a viscosity sub-solution to \eqref{eq: HJB lambda t}. Again since $u_{\lambda}(x,t)$ is a viscosity solution it is also a viscosity super-solution to \eqref{eq: HJB lambda t}, and we conclude with the same comparison principle that $\underline{v}(x,t) \leq u_{\lambda}(x,t)$. All together, we obtain
\begin{equation*}
    |u_{\lambda}(x,t) - u_{\lambda}(x,t+h)| \leq  \frac{1-e^{-\lambda h}}{\lambda}\|f\|_{\infty} \leq |h|\, \|f\|_{\infty}
\end{equation*}
which is the desired estimate. 
\end{proof}

\begin{proof}[Proof of (iii)]
We divide the proof into three steps. First, we approximate the right-hand side $f(\cdot)$ by a family of smooth (Lipschitz continuous) functions using standard mollification. The associated optimal control problem yields a value function whose gradient can be estimated in terms of the approximation. In the second step, we use the corresponding HJB equation to show that this gradient bound is in fact uniform, i.e., independent of the particular approximation. Finally, we verify the uniform convergence of the value functions and pass to the limit in the approximation. By the stability of viscosity solutions, this allows us to recover the original HJB equation and the corresponding value function.

\textit{Step 1. (The approximation)} We use a standard mollification. Let $\varrho\in C^{\infty}_{c}(\mathbb{R}^{n})$, the set of smooth and compactly supported functions. Suppose $\varrho\geq 0$, its support is such that $\operatorname{supp}(\varrho)\subset B(0,1)$, the closed ball centered in $0$ with radius $1$, and $\int_{\mathbb{R}^{n}}\varrho(x)\dd x= 1$. We define the scaled mollifier $\varrho^{\varepsilon}(x) = \frac{1}{\varepsilon^{n}}\varrho(x/\varepsilon)$, and the mollified function 
\begin{equation*}
    f^{\varepsilon}(x) : = f \ast  \varrho^{\varepsilon} (x) = \int_{\mathbb{R}^{n}}f(x-y) \varrho^{\varepsilon}(y)\dd y.
\end{equation*}
Recalling Assumption \ref{f: nice}, we know  that $f^{\varepsilon}(\cdot) \in C^{\infty}(\mathbb{R}^{n})$ and $f^{\varepsilon} \to f$ uniformly in every compact subsets of $\mathbb{R}^{n}$, when $\varepsilon\to 0$. See for example \cite[Theorem 7 in \S C.4]{evans2010partial} or \cite[Lemma 7.1]{salsa2016partial}. Moreover it holds $\|f^{\varepsilon}\|_{\infty}\leq \|f\|_{\infty}$, and 
\begin{equation*}
    |D f^{\varepsilon}(x)| \leq \frac{C}{\varepsilon}\|f\|_{\infty} \quad \quad \text{ where } \; C=\int_{\mathbb{R}^{n}} D\varrho(y)\dd y.
\end{equation*}
Given such approximation $f^{\varepsilon}(\cdot)$, we consider its corresponding optimal control problem \eqref{opt intro} and associated HJB equation \eqref{eq: HJB lambda t}, which are
\begin{equation}\label{OCP eps}
\begin{aligned}
    u^{\varepsilon}_{\lambda}(x,t) := \; 
    \inf\limits_{\alpha(\cdot)} \; & \int_{0}^{t}\; \left(\frac{1}{2}|\alpha(s)|^{2} + f^{\varepsilon}(y_{x}^{\alpha}(s))\right)\, e^{-\lambda s}\;\text{d}s,\\
    & \text{such that }\; \dot{y}_{x}^{\alpha}(s) = \alpha(s),\quad y_{x}^{\alpha}(0)=x\in\mathbb{R}^n\\
    & \text{and the controls } \alpha(\cdot):[0,\infty) \to \mathbb{R}^{n} \text{ are measurable. } 
\end{aligned}
\end{equation}
and 
\begin{equation}\label{eq: HJB eps}
\left\{\;
\begin{aligned}
    \partial_{t}u^{\varepsilon}_{\lambda}(x,t) + \lambda\,u^{\varepsilon}_{\lambda}(x,t) + \frac{1}{2}|Du^{\varepsilon}_{\lambda}(x,t)|^{2} &= f^{\varepsilon}(x),  &&\quad\quad (x,t) \in \mathbb{R}^{n}\times(0,+\infty)\\
    u^{\varepsilon}_{\lambda}(x,0) &= 0,  &&\quad\quad x\in \mathbb{R}^{n}.
\end{aligned}
\right.
\end{equation}
Using \eqref{OCP eps}, we provide next a first estimate for $D u^{\varepsilon}(x,t)$. 

Let $h\in \mathbb{R}^{n}$ be fixed. Let $\alpha_{\delta}(\cdot)$ be a $\delta$-optimal control for the problem starting from $x+h$, that is
\begin{equation*}
    \int_{0}^{t}\; \left(\frac{1}{2}|\alpha_{\delta}(s)|^{2} + f^{\varepsilon}(y_{x+h}^{\alpha_{\delta}}(s))\right)\, e^{-\lambda s}\;\text{d}s \leq u^{\varepsilon}_{\lambda}(x+h,t) + \delta.
\end{equation*}
This same control being sub-optimal for the problem starting from $x$ yields
\begin{equation*}
    u^{\varepsilon}_{\lambda}(x,t) \leq \int_{0}^{t}\; \left(\frac{1}{2}|\alpha_{\delta}(s)|^{2} + f^{\varepsilon}(y_{x}^{\alpha_{\delta}}(s))\right)\, e^{-\lambda s}\;\text{d}s.
\end{equation*}
Therefore we have
\begin{equation*}
\begin{aligned}
    u^{\varepsilon}_{\lambda}(x,t) - u^{\varepsilon}_{\lambda}(x+h,t) 
    & \leq \int_{0}^{t}\; \left(f^{\varepsilon}(y_{x}^{\alpha_{\delta}}(s)) - f^{\varepsilon}(y_{x+h}^{\alpha_{\delta}}(s))\right)\, e^{-\lambda s}\;\text{d}s \, + \delta\\
    & \leq \frac{C}{\varepsilon}\|f\|_{\infty} \int_{0}^{t}\; \left|y_{x}^{\alpha_{\delta}}(s) - y_{x+h}^{\alpha_{\delta}}(s)\right|\, e^{-\lambda s}\;\text{d}s \, + \delta\\
    & \leq |h|\,\frac{C}{\varepsilon}\|f\|_{\infty} \,\frac{1-e^{-\lambda t}}{\lambda}\, +\delta
\end{aligned}
\end{equation*}
where the last inequality is a consequence of 
 $\; y^{\alpha_{\delta}}_{x}(s)  = x + \int_{0}^{s}\alpha_{\delta}(r)\dd r\;$ and $\;y^{\alpha_{\delta}}_{x+h}(s)  = x+h + \int_{0}^{s}\alpha_{\delta}(r)\dd r.\;$
Exchanging the roles of $x$ and $x+h$, and sending $\delta \to 0$, yield
\begin{equation*}
\begin{aligned}
    |u^{\varepsilon}_{\lambda}(x,t) - u^{\varepsilon}_{\lambda}(x+h,t)| 
     \leq |h|\; \frac{C}{\varepsilon}\|f\|_{\infty}  \,\frac{1-e^{-\lambda t}}{\lambda}.
\end{aligned}
\end{equation*}

\textit{Step 2. (The uniform bound)}

The previous step guarantees that the value function $u^{\varepsilon}_{\lambda}(x,t)$ is locally Lipschitz in $x$. Hence the gradient exists almost everywhere and the HJB equation \eqref{eq: HJB eps} --of which $u^{\varepsilon}_{\lambda}(x,t)$ is the viscosity solution-- is satisfied almost everywhere \cite[Proposition II.1.9(b), p.31]{bardi1997optimal}. Using the latter PDE, together with the two previous estimates $|\partial_{t}u^{\varepsilon}_{\lambda}(x,t)|\leq \|f\|_{\infty}$ and $|\lambda\,u^{\varepsilon}_{\lambda}(x,t)|\leq \|f\|_{\infty}$, we can then obtain the desired uniform bound $|Du^{\varepsilon}_{\lambda}(x,t)| \leq \sqrt{6\|f\|_{\infty}}$ almost everywhere. The same arguments when used for the control problems whose value functions are $u^{\varepsilon}_{\lambda}(x)$ and $u^{\varepsilon}(x,t)$ will show in fact that $|Du^{\varepsilon}_{\lambda}(x)|,\, |Du^{\varepsilon}(x,t)| \leq 2\sqrt{\|f\|_{\infty}}$.

\textit{Step 3. (The conclusion via stability)}

So far we know that $|Du^{\varepsilon}_{\lambda}(x,t)| \leq \sqrt{6\|f\|_{\infty}}$ a.e., that $f^{\varepsilon} \to f$ locally uniformly, and that $u^{\varepsilon}$ solves in the viscosity sense the PDE \eqref{eq: HJB eps}. We wish to send $\varepsilon\to 0$ in order to recover our original solution $u_{\lambda}(x,t)$ solution to \eqref{eq: HJB lambda t} satisfying moreover the desired bound $|Du_{\lambda}(x,t)| \leq \sqrt{6\|f\|_{\infty}}$. This shall be a consequence of the stability property for viscosity solutions, provided $u^{\varepsilon}_{\lambda} \to u_{\lambda}$ locally uniformly. See for example \cite[Proposition II.2.2, Corollary V.1.8, Corollary V.4.27]{bardi1997optimal}, \cite[\S 2.3]{barles1994solutions}, \cite[\S6]{crandall1992user}, \cite[Lemma II.6.2]{fleming2006controlled}. We verify the latter convergence as follows
\begin{equation*}
\begin{aligned}
    u^{\varepsilon}_{\lambda}(x,t) - u^{\varepsilon}_{\lambda} 
    & \leq \sup\limits_{\alpha(\cdot)} \int_{0}^{t} \left( f^{\varepsilon}(y_{x}^{\alpha}(s)) - f(y_{x}^{\alpha}(s))\right)\,e^{-\lambda s}\dd \,s \leq \|f^{\varepsilon} - f\|_{\infty}\, \frac{1-e^{-\lambda t}}{\lambda}
\end{aligned}
\end{equation*}
where the first inequality is a consequence of ``$\inf(\dots) - \inf(\dots) \leq \sup(\dots)$'', and $\|f^{\varepsilon} - f\|_{\infty}$ should be understood as the essential supremum over a compact subset of $\mathbb{R}^{n}$ which we assume is fixed and hence do not specify here. 
Exchanging the roles of $u^{\varepsilon}$ and $u$ yields
\begin{equation*}
    \|u^{\varepsilon} - u\|_{\infty} \leq \|f^{\varepsilon} - f\|_{\infty}\, \frac{1}{\lambda}
\end{equation*}
where again, the notation $\|u^{\varepsilon} - u\|_{\infty}$ should be understood locally both in $x$ and $t$. This ensures the desired local uniform convergence and concludes the proof. 
\end{proof}

\section{Proof of Lemma \ref{lem: conv measures}}\label{appendix 2}

\begin{proof}
Recall the notations in Definition \ref{def: occ}. 
Let $\mathscr{O}$ be a fixed Borel subset of $\mathbb{R}^{n}$. We have
\begin{equation*}
\begin{aligned}
    & \Gamma_{t}^{\lambda}[x,\alpha](\mathscr{O}) - \Gamma^{\lambda}[x,\alpha](\mathscr{O}) = \frac{\lambda}{1-e^{-\lambda t}} \int_0^t \mathds{1}_{\mathscr{O}}\left(y_{x}^{\alpha}(s)\right)\, e^{-\lambda s} \dd s - \lambda \int_0^{\infty} \mathds{1}_{\mathscr{O}}\left(y_{x}^{\alpha}(s)\right)\, e^{-\lambda s} \dd s\\
    & \quad = \left(\frac{\lambda}{1-e^{-\lambda t}} - \lambda\right)\int_0^t \mathds{1}_{\mathscr{O}}\left(y_{x}^{\alpha}(s)\right)\, e^{-\lambda s} \dd s - \lambda \int_{t}^{\infty} \mathds{1}_{\mathscr{O}}\left(y_{x}^{\alpha}(s)\right)\, e^{-\lambda s} \dd s\\
    & \quad \leq \left|\frac{\lambda}{1-e^{-\lambda t}} - \lambda\right| \int_0^t \, e^{-\lambda s} \dd s + \lambda \int_{t}^{\infty} \, e^{-\lambda s} \dd s \\
    & \quad = \left|\frac{\lambda}{1-e^{-\lambda t}} - \lambda\right| \frac{1- e^{-\lambda t}}{\lambda} + e^{-\lambda t} \; = \; 2\, e^{-\lambda t}
\end{aligned}
\end{equation*}
hence $|\Gamma_{t}^{\lambda}[x,\alpha](\mathscr{O}) - \Gamma^{\lambda}[x,\alpha](\mathscr{O})| \leq 2 \, e^{-\lambda t}$ for all $\mathscr{O}$, and $\|\Gamma_{t}^{\lambda}[x,\alpha](\cdot) - \Gamma^{\lambda}[x,\alpha](\cdot)\|_{TV}\to 0$ when $t\to\infty$. 

We also have
\begin{equation*}
\begin{aligned}
    & \Gamma_{t}^{\lambda}[x,\alpha](\mathscr{O}) - \Gamma_{t}[x,\alpha](\mathscr{O}) = \frac{\lambda}{1-e^{-\lambda t}} \int_0^t \mathds{1}_{\mathscr{O}}\left(y_{x}^{\alpha}(s)\right)\, e^{-\lambda s} \dd s - \frac{1}{t} \int_0^{t} \mathds{1}_{\mathscr{O}}\left(y_{x}^{\alpha}(s)\right)\,\dd s\\
    &\quad = \left(\frac{\lambda t}{1-e^{-\lambda t}} - 1\right)  \frac{1}{t}\int_0^{t} \mathds{1}_{\mathscr{O}}\left(y_{x}^{\alpha}(s)\right)\, e^{-\lambda s}\,\dd s - \frac{1}{t}\int_0^{t} \mathds{1}_{\mathscr{O}}\left(y_{x}^{\alpha}(s)\right)\, \left(1-e^{-\lambda s}\right)\,\dd s\\
    &\quad \leq  \left|\frac{\lambda t}{1-e^{-\lambda t}} - 1\right| + \frac{1}{t} \left|\int_0^{t} \left(1-e^{-\lambda s}\right)\,\dd s\right| = \left|\frac{\lambda t}{1-e^{-\lambda t}} - 1\right| + \left|1 - \frac{1 - e^{-\lambda t}}{\lambda t}\right|.
\end{aligned}
\end{equation*}
The latter bound being independent of $\mathscr{O}$ and vanishes when $\lambda \to 0$, the conclusion then follows: $\|\Gamma_{t}^{\lambda}[x,\alpha](\cdot) - \Gamma_{t}[x,\alpha](\cdot)\|_{TV}\to 0$ when $\lambda\to 0$.
\end{proof}

\bibliographystyle{amsplain}
\bibliography{bibliography}

\end{document}